\definecolor{darkergreen}{rgb}{0.0, 0.5, 0.0}
\numberwithin{equation}{section}
\def\theequation{\arabic{section}.\arabic{equation}}
\newcommand{\be}{\begin{eqnarray}}
\newcommand{\ee}{\end{eqnarray}}
\newcommand{\ce}{\begin{eqnarray*}}
\newcommand{\de}{\end{eqnarray*}}
\newtheorem{theorem}{Theorem}[section]
\newtheorem*{theorem*}{Theorem}
\newtheorem{lemma}[theorem]{Lemma}
\newtheorem{proposition}[theorem]{Proposition}
\newtheorem{Examples}[theorem]{Example}
\newtheorem{corollary}[theorem]{Corollary}
\newtheorem{definition}[theorem]{Definition}
\theoremstyle{definition}
\newtheorem{remark}[theorem]{Remark}
\newcommand{\cdummy}{\cdot}
\newcommand{\tmop}[1]{\ensuremath{\operatorname{#1}}}
\def\${|\!|\!|}
\DeclareMathOperator{\supp}{supp}
\def\eps{\varepsilon}
\def\p{\partial}
\def\<{{\langle}}
\def\>{{\rangle}}
\def\({{\Big(}}
\def\){{\Big)}}
\def\bx{{\mathbf{x}}}
\def\dif{{\mathord{{\rm d}}}}
\def\no{\nonumber}
\def\={&\!\!=\!\!&}
\def\cF{{\mathcal F}}
\def\cL{{\mathcal L}}
\def\cT{{\mathcal T}}
\def\mN{{\mathbb N}}
\def\mR{{\mathbb R}}
\def\mT{{\mathbb T}}
\def\mZ{{\mathbb Z}}
\def\bP{{\mathbf P}}
\def\1{{\mathbf{1}}}
\def\E{\mathbf E}
\def\geq{\geqslant}
\def\leq{\leqslant}
\def\div{\mathord{{\rm div}}}
\def\eps{\varepsilon}
\def\p{\partial}
\def\<{{\langle}}
\def\>{{\rangle}}
\def\({{\Big(}}
\def\){{\Big)}}
\def\bx{{\mathbf{x}}}
\def\dif{{\mathord{{\rm d}}}}
\def\no{\nonumber}
\def\={&\!\!=\!\!&}
\def\bt{\begin{theorem}}
\def\et{\end{theorem}}
\def\bl{\begin{lemma}}
\def\el{\end{lemma}}
\def\br{\begin{remark}}
\def\er{\end{remark}}
\def\bx{\begin{Examples}}
\def\ex{\end{Examples}}
\def\bd{\begin{definition}}
\def\ed{\end{definition}}
\def\bp{\begin{proposition}}
\def\ep{\end{proposition}}
\def\bc{\begin{corollary}}
\def\ec{\end{corollary}}
\def\geq{\geqslant}
\def\leq{\leqslant}
\def\div{\mathord{{\rm div}}}
\def\bP{{\mathbf P}}
 \def\R{\mathbb R}
 \def\R{\mathbb R}    
\def\N{\mathbb N}  
\def\<{\langle} \def\>{\rangle}
\begin{document}

\title[SQG equation perturbed by derivatives of space-time white noise]{Surface quasi-geostrophic equation perturbed by  derivatives of space-time white noise}

\author{Martina Hofmanov\'a}
\address[M. Hofmanov\'a]{Fakult\"at f\"ur Mathematik, Universit\"at Bielefeld, D-33501 Bielefeld, Germany}
\email{hofmanova@math.uni-bielefeld.de}

\author{Xiaoyutao Luo}
\address[X. Luo]{Academy of Mathematics and Systems Science,
	Chinese Academy of Sciences, Beijing 100190, China}
\email{xiaoyutao.luo@amss.ac.cn}

\author{Rongchan Zhu}
\address[R. Zhu]{Department of Mathematics, Beijing Institute of Technology, Beijing 100081, China}
\email{zhurongchan@126.com}

\author{Xiangchan Zhu}
\address[X. Zhu]{ Academy of Mathematics and Systems Science,
Chinese Academy of Sciences, Beijing 100190, China}
\email{zhuxiangchan@126.com}
\thanks{
R.Z. and X.Z. are grateful for
the financial supports   by National Key R\&D Program of China (No. 2022YFA1006300).
R.Z. gratefully acknowledges financial support from the NSFC (No.
11922103, 12271030).
  X.Z. is grateful for the financial supports  in part by National Key R\&D Program of China (No. 2020YFA0712700) and the NSFC (No.   12090014, 12288201) and
  the support by key Lab of Random Complex Structures and Data Science,
 Youth Innovation Promotion Association (2020003), Chinese Academy of Science.
The research of  M.H.  was funded by the European Research Council (ERC) under the European Union's Horizon 2020 research and innovation programme (grant agreement No. 949981). The research of M.H., R.Z. and X.Z.  was funded by the Deutsche Forschungsgemeinschaft (DFG, German Research Foundation) -- SFB 1283/2 2021 -- 317210226.
}

\begin{abstract}
We consider a family of singular surface quasi-geostrophic equations
$$
\partial_{t}\theta+u\cdot\nabla\theta=-\nu (-\Delta)^{\gamma/2}\theta+(-\Delta)^{\alpha/2}\xi,\qquad u=\nabla^{\perp}(-\Delta)^{-1/2}\theta,
$$
on $[0,\infty)\times\mathbb{T}^{2}$, where $\nu\geq 0$, $\gamma\in [0,3/2)$, $\alpha\in [0,1/4)$ and $\xi$ is a space-time white noise.
For the first time, we establish the \emph{existence of infinitely many non-Gaussian}
\begin{itemize}
\item probabilistically strong solutions for every initial condition in $C^{\eta}$, $\eta>1/2$;
\item ergodic stationary solutions.
\end{itemize}
The result presents a single approach applicable in the subcritical, critical as well as supercritical regime in the sense of Hairer~\cite{Hai14}. It also applies  in the particular setting $\alpha=\gamma/2$ which formally possesses a Gaussian invariant measure.
In our proof, we first introduce a modified Da Prato--Debussche trick which, on the one hand, permits to convert irregularity in time into irregularity in space  and, on the other hand, increases the regularity of the linear solution. Second, we develop a convex integration iteration for the corresponding nonlinear equation which yields non-unique non-Gaussian solutions satisfying powerful global-in-time estimates and generating stationary as well as ergodic stationary solutions.
 \end{abstract}

\date{\today}

\maketitle

\tableofcontents

\section{Introduction}

We are interested in construction of solutions to the  surface quasi-geostrophic (SQG) equation  on $\mathbb{T}^2$ driven by a fractional derivative of a space-time white noise. More precisely,
\begin{equation}\label{eq2}
\begin{aligned}
\partial_{t }\theta+ u\cdot\nabla\theta &= -\nu\Lambda^{\gamma}\theta+\Lambda^\alpha \xi,\\
u=\nabla^\perp\Lambda^{-1}\theta&=(-\partial_2\Lambda^{-1}\theta,\partial_1\Lambda^{-1}\theta)=(-\mathcal{R}_2\theta, \mathcal{R}_1\theta),\\
\theta(0)&=\theta_{0},
\end{aligned}
\end{equation}
where $\Lambda=(-\Delta)^{1/2}$, $\nu\geq0$, $\gamma\in [0,3/2)$, $\alpha\in[0,1/4)$,  $\mathcal{R}=(\mathcal{R}_1,\mathcal{R}_2)$ is the pair of Riesz transforms and $\xi$ is a space-time white noise and can be written as time derivative of an $L^2$-cylindrical Wiener process $B$ on some stochastic basis $(\Omega,\mathcal{F},(\mathcal{F}_t)_{t\geq0},\mathbf{P})$. The unknown scalar field $\theta$ denotes for instance   temperature in the geophysical fluid dynamics, see e.g.~\cite{pedlosky2013geophysical}.

As in the deterministic literature~\cite{marchand2008existence} on the SQG equation, we rely on a weak formulation of the equation in $\dot{H}^{-1/2}$. In particular, we work with analytically weak solutions with zero spatial mean and we profit from the fact that the nonlinearity in \eqref{eq2} is well-defined for $\theta\in L^{2}_{\rm{loc}}([0,\infty);\dot{H}^{-1/2})$. Indeed, for any $\psi\in C^{\infty}$  it holds
\begin{equation}\label{eq:com}
\langle u\cdot\nabla\theta,\psi\rangle =\frac12 \langle \theta,[\mathcal{R}^{\perp}\cdot,\nabla\psi]\theta\rangle,
\end{equation}
where the commutator $[\mathcal{R}^{\perp}\cdot,\nabla\psi]=-[\mathcal{R}_2\cdot,\partial_1\psi]+[\mathcal{R}_1\cdot,\partial_2\psi]$ maps $\dot{H}^{-1/2}$ to $\dot{H}^{1/2}$ (c.f. Proposition~\ref{p:51}). Hence,
we aim at solving \eqref{eq2} in the following sense.

\begin{definition}\label{d:1}
  We say that $\theta \in L^{2}_{\rm{loc}}([0,\infty);\dot{H}^{-1/2})\cap C([0,\infty);B^{-1}_{\infty,1})$ is an analytically weak solution to \eqref{eq2}
  provided $\theta$ is $(\mathcal{F}_t)$-adapted and
for any $t\geq0$
\begin{align*}
\langle \theta(t),\psi\rangle+\int_0^t\frac12\langle \Lambda^{-1/2}\theta,\Lambda^{1/2}[\mathcal{R}^\perp\cdot,\nabla \psi] \theta\rangle\dif t
 = \langle \theta_0,\psi\rangle - \int_{0}^{t}\langle\nu\Lambda^{\gamma } \theta
,\psi\rangle\dif t+\int_0^t\langle\Lambda^\alpha\psi,\dif B_t\rangle
\end{align*}
holds  for every $\psi \in C^{\infty}$.
\end{definition}

Our main result reads as follows. The reader is referred to Theorem~\ref{thm:6.1} and Theorem~\ref{thm:4.5} for the precise formulation and further details.

\bt\label{thm:1.2}
Let $\nu\geq 0$, $\gamma\in[0,3/2)$, $\alpha\in[0,1/4)$ and let $\xi$ be a space-time white noise. Then the SQG equation \eqref{eq2} possesses
\begin{enumerate}
\item infinitely many non-Gaussian analytically weak  probabilistically strong solutions for every  $\theta_{0}\in C^{\eta}$ with $\eta>1/2$;
\item infinitely many non-Gaussian ergodic stationary solutions.
\end{enumerate}
\et

\begin{remark}
The main contribution of Theorem \ref{thm:1.2} is as follows. We discuss further details in Section~\ref{s:1.2}.
	\begin{itemize}
		\item This is the first result of a non-unique non-Gaussian construction of global solutions to the SQG equation (both inviscid and viscous) in the presence of a \emph{space-time} white noise, cf.~\cite{HZZ22} where the case of only \emph{spatial} white noise was covered.
		
		\item To handle the irregularity in time of the space-time white noise,  we develop a modified Da~Prato--Debussche trick and a novel convex integration iteration that requires the use of \emph{all} previous induction estimates, see~Proposition \ref{p:iteration} below.

	\end{itemize}
\end{remark}

\subsection{SPDEs and criticality}

The  problem \eqref{eq2} falls in the category of singular stochastic partial differential equations (SPDEs) where the nonlinearity is heuristically not well defined analytically  due to the irregularity of the noise $\xi$. This can be seen as follows. In the scaling of the equation, $\Lambda^{\alpha}\xi$ is a random distribution of space-time regularity $-1-\gamma/2-\alpha-\kappa$ for arbitrary $\kappa>0$. By Schauder's estimates in  the viscous setting    $\nu>0$ we expect a solution $\theta$ to be $\gamma$-degrees of regularity better, i.e. $-1+\gamma/2-\alpha-\kappa$. This regularity is always negative in our range of parameters and hence $\theta$ is only a distribution and the nonlinearity in the mild formulation does not make sense analytically. A further scaling argument then determines the so-called subcritical, critical and supercritical regimes as formulated by Hairer's  theory of regularity structures \cite{Hai14}. Depending on the particular value of the parameters $\nu$, $\gamma$, $\alpha$, the SQG equation \eqref{eq2} may belong to either one of these regimes. We also observe that the solutions constructed in Theorem \ref{thm:1.2} stay in $\dot{H}^{-1/2}$. This can potentially offer improved regularity compared to utilizing only Schauder's estimates when $\gamma$ is small.

The problem of local well-posedness of solutions in the subcritical regime is well-understood by Hairer's regularity structures theory \cite{Hai14} and the paracontrolled distributions method by Gubinelli, Imkeller and Perkowski  \cite{GIP15}.
These theories permit to treat a large number of singular subcritical SPDEs
(cf. \cite{BHZ19, CH16, BCCH21, OSSW21, HS23})
including the Kardar--Parisi--Zhang (KPZ) equation, stochastic quantization equations for quantum fields as well as the SQG equation driven by space-time white
	noise, i.e.  $\gamma>4/3$, $\alpha=0$, in \cite{FS21}.
		 Existence of global solutions typically relies on some special feature of the equation at hand, {like a strong drift, a particular transform or properties of an invariant measure. We refer to \cite{HZZ21,HZZ22} for a detailed discussion and further references.}

Additionally, in \cite{HZZ21},  non-unique global solutions to 3D Navier--Stokes equations driven by space--time white noise were constructed  via combination of paracontrolled calculus and convex integration. In the 2D case, global unique solutions to  Navier--Stokes equations driven by space--time white noise, with an additional perturbation in $C^{-2+\kappa}$  were obtained in \cite{HR23} and sharp non-uniqueness via convex integration method was established in \cite{LZ23}.

As previously discussed, the nonlinear term in the SQG equation \eqref{eq2} remains well-defined even when  $\theta\in \dot{H}^{-1/2}$, thereby giving rise to a concept of weak solution. In the deterministic scenario, the associated estimate for the $H^{-1/2}$-norm of the solution $\theta$ can be derived, resulting in the establishment of global-in-time weak solutions.
	However, in our current context, this approach is not applicable to infer global solutions due to the singular nature of the noise.

There are only a few results in the critical and supercritical setting. In the critical regime, extensive exploration of the 2D isotropic Kardar--Parisi--Zhang (KPZ) equation was done in  \cite{CD20, CSZ20, Gu20} by employing Cole--Hopf's transform. Meanwhile, the study of the 2D anisotropic KPZ equation was addressed in \cite{CES21, CET21}, building upon the underlying invariant measure given by a Gaussian distribution. Notable advancements also emerged in the domain of the critical 2D stochastic heat flow, as demonstrated by recent developments in \cite{CSZ21}.

	Transitioning to the supercritical regime, a recent series of works encompassing \cite{MU18, CCM20, DGRZ20, CNN20, CGT23}  delved into the KPZ equation or its derivative Burgers equation for dimensions $d\geq 3$. More specifically, these studies  comprehended the $d$-dimensional KPZ equation or Burgers equations as limits of suitably regularized equations. Notably, the regularization incorporated a vanishing scaling constant preceding the nonlinearity. This process culminated in a stochastic heat equation as the limiting equation, characterized by a Gaussian law, although exceptions are noted in \cite{CSZ21}.

In a related context, the work of \cite{GJ13} has achieved the construction of global stationary probabilistically weak solutions for the stochastic fractional Burgers equation, both within the critical and supercritical regimes by developing the so-called It\^{o}'s trick. In \cite{GP23} the authors sketched the well-posedness for the fractional Burgers equation in the critical case and the SQG equation \eqref{eq2} when $\gamma\geq 2$, $\alpha=\gamma/2$ by  constructing the infinitesimal generator. Moreover, using It\^{o}'s trick they could obtain existence of global stationary probabilistically weak solutions when $\gamma>0$, $\alpha=\gamma/2$. This accomplishment has been realized through the energy solution methodology, reliant on the associated invariant measure originating from the law of space white noise. If $\alpha\neq \gamma/2$ then this method
may fail.

Unlike the above mentioned techniques, our construction in the present paper is independent of the particular regime at hand and applies to the subcritical, critical as well as supercritical case. In our previous work \cite{HZZ22}, we obtained a first result in this direction in the case of purely spatial perturbation: $\xi$ was a space white noise and $\alpha\in [0,1)$.
The approach   made use of the convex integration, an iterative technique of direct construction of non-unique solutions. Even though  certain time dependence could be included in the noise, it would have to be regular, leaving the space-time white noise out of reach. In the present manuscript, we overcome these difficulties by introducing several new ideas.

\subsection{Outline of ideas and difficulties}\label{s:1.2}

Already in the deterministic situation, the first problem of trying to deploy a convex integration construction to the SQG equation is the odd structure of the SQG nonlinearity, as it was observed in~\cite{shvConvInt,isettVicol}. This means that the multiplier relating $\theta$ to $u$ is an odd function of frequency in Fourier space. This oddness relates to the boundedness of the nonlinearity in $\dot{H}^{-1/2}$, a key to constructing global weak solutions~\cite{marchand2008existence}.  However, it  remained open whether they are uniquely determined by their initial data.

The  breakthrough result~\cite{BSV19}  successfully overcame the issue of odd multiplier and constructed nontrivial high-high to low frequency interactions that led to the first non-uniqueness result for the SQG equation.  The convex integration scheme was designed for the associated momentum equation satisfied by the potential velocity $\Lambda^{-1}u$. Subsequent developments of \cite{CKL21} and \cite{IM21} provided different convex integration schemes to  construct non-unique weak solutions.  The technique developed in  \cite{IM21} works directly on the level of the scalar field $\theta$ and applies also to general odd multipliers. The construction of \cite{CKL21} is particularly appealing due to its simplicity: it applies to the scalar function $\Lambda^{-1}\theta$ and makes use of the special structure of the SQG nonlinearity, leading to a concise proof. We also mention that the method in \cite{BSV19} has been extended to the SQG equation driven by trace-class type additive noise in \cite{Y22}.

 {The convex integration scheme in the present paper is inspired by \cite{HZZ22} which in turn relied on the method of \cite{CKL21}.}  We work on the level of $v:=\Lambda^{-1}\theta$ and in the first step we decompose the equation with the help of a modified Da Prato--Debussche trick  with multiple benefits. Roughly speaking, we decompose $v = z+f$ where $z$ solves the linear SPDE
\begin{equation*}
\partial_{t}z = - \nu_{1}\Lambda^{\gamma_{1}} z +\Lambda^{-1+\alpha}\xi,
\end{equation*}
 {where $\nu_{1}>0$}, possibly  $\gamma_{1}\neq \gamma$ and $f$ solves the corresponding nonlinear equation (see Section~\ref{s:d} for more details).  {We set the  viscosity $\nu_{1}=1$ for simplicity, even in the inviscid setting  $\nu=0$.} The additional parameter $\gamma_{1}$ is particularly helpful if $\gamma$ and/or $\eta$ are small within their respective ranges. The conventional Da Prato--Debussche trick  refers to the case of  {$\nu_{1}=\nu>0$ and} $\gamma_{1}=\gamma$ and  it has been extensively applied in the literature in various subcritical settings (c.f. \cite{DPD03}). It is however not suitable for our purposes for all values of the parameters  {$\nu$,} $\gamma$, $\alpha$, $\eta$ due to the possibly critical/supercritical nature of the SQG equation \eqref{eq2}. For instance, if $\gamma$ is small then even by  convex integration we are not able to treat the terms containing $z$ in the equation for $f$.
By choosing $\gamma_{1}>\gamma$ the regularity of $z$ increases and  the control of the mentioned terms, even though still very delicate, becomes  feasible. We emphasize that as we are in the critical or supercritical regime, further decomposition like the Picard iteration does not provide any gain in regularity.

The price to pay for this modification of the Da Prato--Debussche trick is the additional term $\Lambda^{\gamma_{1}}z$ in the equation for $f$. Seemingly, there is no improvement as this term  formally has  the same spatial regularity as the original  noise term in the equation for $v$. However, the point is that the time irregularity is eliminated, so effectively the space-time irregularity of the noise was transformed into a spatial irregularity of $\Lambda^{\gamma_{1}}z$, bringing us closer to the convex integration technique developed in \cite{HZZ22}.
In the latter work, however, no probability theory was involved and the proof worked for arbitrary (deterministic) spatial distribution $\xi $ within the appropriate regularity class.

In the present paper, we have an additional difficult term, namely, the quadratic term $\Lambda z\nabla^{\perp}z$ in the equation for $f$. As $z$ only has the regularity $\gamma_{1}/2-\alpha-\kappa$ for arbitrary $\kappa>0$, the said product is not well defined analytically. Nevertheless, using Gaussianity of $z$ we may apply renormalization to define it as a Wick product $:\Lambda z\nabla^{\perp}z:$ (see Proposition~\ref{lem:re}). This is then employed in the equation for $f$ as a replacement for the ill-defined product. The next step is purely deterministic: via a convex integration iterative procedure we construct approximations $f_{n}$, $n\in\mathbb{N}_{0}$, and show that they converge to a limit $f$.  Finally, letting $\theta=\Lambda z + \Lambda f$ we obtain a solution to the SQG equation \eqref{eq2} in the sense of Definition~\ref{d:1}.

 In contrast to the deterministic framework of the convex integration method applied to the SQG equation, as outlined in  \cite{BSV19, CKL21, IM21}, it is also necessary to handle the additional nonlinear term that involves the process $z$. Unlike the lacunary Fourier modes of the building blocks defined in \cite{CKL21}, the variable $z$ lacks this property. Consequently, it becomes imperative to undertake a more intricate decomposition, akin to the dyadic partition technique employed in Fourier space, for these specific terms. This decomposition aims to ensure that the resulting components are either confined within a small annular region or localized within a  small ball.

We remark that even though the renormalized product $:\Lambda z\nabla^{\perp}z:$ is indispensable for the construction, the renormalization constant does not explicitly appear in the definition of  solution to \eqref{eq2}. This goes back to the form of the nonlinearity, in particular the antisymmetry of the Riesz transform and the formula \eqref{eq:com}.

Finally, we mention that this modified Da Prato--Debussche trick combined with convex integration also applies to  fractional Navier--Stokes equations in the supercritical and critical regime, see Section~\ref{other}.

\subsection{Organization of the paper}

In Section~\ref{s:not}, we summarize frequently used notations. Section~\ref{s:d} represents the first step in the construction: we introduce the modified Da Prato--Debussche trick and decompose the problem into a linear SPDE with irregular space-time noise and a nonlinear PDE with spatial irregularity. In Section~\ref{s:proba}, we present the probabilistic construction, namely, we employ renormalization to make sense of the analytically ill-defined product. Section~\ref{s:a} sets up the analytic construction via the convex integration iteration. The detailed construction is presented in Section~\ref{s:it}. Our main result on the initial value problem is established in Section~\ref{s:1.1}, whereas the result on non-unique ergodicity is shown in  Section~\ref{s:in}. Further extension to fractional Navier--Stokes system is presented  in Section \ref{other}. In Appendix~\ref{s:B}, we collect several auxiliary results.

\section{Notations}
\label{s:not}

\subsection{Function spaces and operators}

  Throughout the paper, we employ the notation $a\lesssim b$ if there exists a constant $c>0$ such that $a\leq cb$, and we write $a\simeq b$ if $a\lesssim b$ and $b\lesssim a$. $\mN_{0}:=\mN\cup \{0\}$. Given a Banach space $E$ with a norm  $\|\cdot\|_E$, we write  $CE$ or $C([0,\infty);E)$ to denote the space of continuous functions from $[0,\infty)$ to $E$.  We  define  $C^1E$ as the space of  continuous functions with locally bounded first order derivative from $[0,\infty)$ to $E$.  Similarly,
  for $\alpha\in(0,1)$ we  define $C^\alpha_TE$ as the space of $\alpha$-H\"{o}lder continuous functions from $[T,T+1]$  to $E$, endowed with the norm $\|f\|_{C^\alpha_TE}=\sup_{s,t\in[T,T+1],s\neq t}\frac{\|f(s)-f(t)\|_E}{|t-s|^\alpha}+\sup_{t\in[T,T+1]}\|f(t)\|_{E}$.
  For $p\in [1,\infty]$ we write $L^p_TE=L^p(0,T;E)$ for the space of $L^p$-integrable functions from $[0,T]$ to $E$, equipped with the usual $L^p$-norm. We also use $L^p_{\mathrm{loc}}([0,\infty);E)$ to denote the space of functions $f$ from $[0,\infty)$ to $E$ satisfying $f|_{[0,T]}\in L^p_T E$ for all $ T>0$.

We also introduce the following notations: the  Fourier and the inverse Fourier transform on $\mR^2$ are defined as
$$\mathcal{F}_{\mR^2}f(\zeta)=\int_{\mR^2}e^{-ix\cdot\zeta}f(x)\dif x,\quad \mathcal{F}_{\mR^2}^{-1}f(x)=\frac1{(2\pi)^2}\int_{\mR^2}e^{ix\cdot\zeta}f(\zeta)\dif \zeta.$$
The  Fourier and the inverse Fourier transform on $\mT^2$ are defined as
$$\mathcal{F}_{\mT^2}f(k)=\hat{f}(k)=\int_{\mT^2}e^{ix\cdot k}f(x)\dif x,\quad k\in\mathbb{Z}^2,\quad \mathcal{F}_{\mT^2}^{-1}f(x)=\frac1{(2\pi)^2}\sum_{k\in\mZ^2}e^{ix\cdot k}f(k).$$
A function on the torus can be viewed as a function on $\mR^2$ by periodic extension. Assume that $f\in C^\infty(\mT^2)$ and $m\in L^1(\mathbb{R}^2)$ and define
$$T_mf(x):=\mathcal{F}_{\mT^2}^{-1}m\mathcal{F}_{\mT^2}f(x)=\mathcal{F}_{\mR^2}^{-1}m\mathcal{F}_{\mR^2}f(x),$$
where the second equality follows from the fact that we view the function on $\mT^2$ as periodic function on $\mR^2$.
 This implies that
\begin{align}\label{fouR}\|T_mf\|_{L^p(\mT^2)}\leq \|\mathcal{F}_{\mR^2}^{-1}m\|_{L^1(\mR^2)}\|f\|_{L^p(\mT^2)},\quad 1\leq p\leq \infty,
\end{align}
\begin{align}\label{fouT}\|T_mf\|_{L^p(\mT^2)}\leq \|\mathcal{F}_{\mT^2}^{-1}m\|_{L^1(\mT^2)}\|f\|_{L^p(\mT^2)},\quad 1\leq p\leq \infty.
\end{align}

For $s\in\mathbb{R}$,  we set $\dot{H}^s=\{f:\int_{\mT^{2}}f \dif x=0,\  \|(-\Delta)^{s/2}f\|_{L^2}<\infty\}$ and equip it with the norm  $\|f\|_{\dot{H}^s}=(\sum_{0\neq k\in\mathbb{Z}^2}|k|^{2s}|\hat{f}(k)|^2)^{1/2}$.  We use $(\Delta_{i})_{i\geq -1}$ to denote the Littlewood--Paley blocks corresponding to a dyadic partition of unity.
Besov spaces on the torus with general indices $\alpha\in \R$, $p,q\in[1,\infty]$ are defined as
the completion of $C^\infty(\mathbb{T}^{d})$ with respect to the norm
$$
\|u\|_{B^\alpha_{p,q}}:=\left(\sum_{j\geq-1}2^{j\alpha q}\|\Delta_ju\|_{L^p}^q\right)^{1/q},$$
with the usual interpretation as the $\ell^\infty$-norm when $q=\infty.$
The H\"{o}lder--Besov space $C^\alpha$ is given by $C^\alpha=B^\alpha_{\infty,\infty}$. Let $s\in\mathbb{R}$. If $f\in B^s_{\infty,1}$ with mean zero, then $f\in \dot{H}^{s}\cap C^s$
and
\begin{align}\label{besov}\|f\|_{\dot{H}^{s}}+\|f\|_{C^{s}}\lesssim \|f\|_{B^s_{\infty,1}}.\end{align}

As in \cite{CKL21}, for two vector-valued functions $v,w :\mathbb{R}^2 \to \mathbb{R}^2 $ we denote
    $$v \approx w\quad \textrm{if }\quad v=w+\nabla^\perp p$$
    holds for some smooth scalar function $p$.
Define the projection operator for $\lambda\geq1$
$$\widehat{(P_{\leq\lambda}g)}(k)=\psi \left(\frac{k}{\lambda}\right)\hat{g}(k).$$
Here $\psi\in C_c^\infty(\mathbb{R}^2)$ satisfies $\psi(k)=0$ for $|k|\geq1$ and $\psi(k)=1$ for $|k|\leq 1/2$. We also introduce the Riesz-type transform $\mathcal{R}_j^o$, $j=1,2$ as follows
$$\hat{\mathcal{R}}_1^o(k_1,k_2)=\frac{25(k_2^2-k_1^2)}{12|k|^2},\quad \hat{\mathcal{R}}_2^o(k_1,k_2)=\frac{7(k_2^2-k_1^2)}{12|k|^2}+\frac{4k_1k_2}{|k|^2}.$$
We define the  norm
$$\|q\|_X:=\|q\|_{L^\infty}+\sum_{j=1}^2\|\mathcal{R}_j^oq\|_{L^\infty}.$$

Finally, we frequently use the  following notations. For $p\in[1,\infty)$, $\beta\in(0,1)$ and a Banach space $B$ with norm $\|\cdot\|_B$ we denote
	$$
	\$u\$^p_{B,p}:=\sup_{t\geq0}\mathbf{E}\left[\sup_{t\leq s\leq t+1}\|u(s)\|^p_{B}\right],\quad  {\$u\$^p_{C^\beta_{t}B,p}:=\sup_{t\geq0 }\mathbf{E}\left[\|u(s)\|^p_{C^\beta_tB}\right]}.
	$$
	
	\subsection{The noise}
We recall the definition of a space-time white noise $\xi$. Formally, $\xi$ is a centered Gaussian distribution with covariance
$$\mathbf{E}\xi(t,x)\xi(t',x')=\delta(t-t')\delta(x-x'),$$
for $t,t'\in\mR^+$, $x,x'\in \mT^2$, where $\delta$ denotes the Dirac delta function. 
In this paper, we consider the space-time white noise with zero spatial mean for simplicity. The space--time white noise $\xi$ with zero spatial mean is the time derivative of an $L^2$-cylindrical Wiener process $B$ with zero spatial mean on some stochastic basis $(\Omega,\mathcal{F},(\mathcal{F}_t)_{t\geq 0},\mathbf{P})$, i.e. $\xi(\phi)=\int \<\phi, \dif B\>$ for $\phi\in L^2(\mR^+\times \mT^2)$ with $\int_{\mT^2}\phi\dif x=0$. In this case $\E\xi(\phi)^2=\|\phi\|_{L^2(\mR^+\times \mT^2)}^2$.

\section{Decomposition}
\label{s:d}

We study  the
SQG equation driven by a fractional derivative of the space-time white noise, namely
\begin{align}\label{eq1}
\begin{aligned}
\partial_t \theta + u \cdummy \nabla \theta &= -\nu \Lambda^{\gamma} \theta +
 \Lambda^\alpha\xi,\\
  u=\nabla^\perp\Lambda^{-1}\theta&=(-\partial_2\Lambda^{-1}\theta,\partial_1\Lambda^{-1}\theta)=(-\mathcal{R}_2\theta, \mathcal{R}_1\theta),\\
  \theta(0)&=\theta_0,
  \end{aligned}
  \end{align}
where $\gamma\in[0,3/2)$, ${\alpha\in[0,1/4)}$ and $\xi$ is a space-time white noise  with zero spatial average on some probability space $(\Omega,\mathcal{F}, \bf{P})$. The initial condition $\theta_{0}$ is generally random, independent of the noise $\xi$, has zero mean and satisfies
for some $r\geq 1$ and $\eta>1/2$
\begin{equation}\label{eq:u0}
\mathbf{E}\|\theta_{0}\|^{2r}_{C^\eta}<\infty.
\end{equation}
We denote by  $(\cF_t)_{t\geq0}$ the  normal filtration generated by the initial data and the cylindrical
Wiener process $B$ such that $\p_tB=\xi$.
For notational simplicity we consider the unitary viscosity $\nu=1$.

The main part of our construction is inspired by the convex integration result from \cite{CKL21}, where existence and non-uniqueness of non-trivial steady state (i.e. time independent) solutions to the deterministic SQG equation with $\xi=0$ was established. The highly irregular case of \eqref{eq1} requires further decomposition and probabilistic arguments.

First, we let $v := \Lambda^{- 1} \theta$ and observe that the equation solved by $v$ reads as
\begin{equation}\label{eq:f}
\begin{aligned}
 \nabla \cdummy (-\partial_t \mathcal{R}v + \Lambda v \nabla^{\perp} v)& =
   \nabla \cdummy ( \Lambda^{\gamma } \mathcal{R} v) + \nabla \cdummy
   (-\mathcal{R} \Lambda^{- 1+\alpha } \xi),\\
    v(0)&=\Lambda^{-1}\theta_0.
    \end{aligned}
    \end{equation}
Hence,  we aim at building a convex integration procedure to solve the following equation
  \begin{equation}\label{eq:f1}
\begin{aligned}
  -\partial_t \mathcal{R}v + \Lambda v \nabla^{\perp} v& \approx
    \Lambda^{\gamma} \mathcal{R} v -\mathcal{R} \Lambda^{- 1+\alpha } \xi,\\
    v(0)&=\Lambda^{-1}\theta_0,
    \end{aligned}
    \end{equation}
    where the notation $\approx$ was explained in Section~\ref{s:not}.
 To this end, we introduce a  modification of the so-called Da Prato--Debussche trick, permitting to separate the difficulties of probabilistic and analytic nature. More precisely, we let $v:=z+f$ where
 \begin{align}\label{eqz}\partial_t z = -\Lambda^{\gamma_1}  z+ \Lambda^{- 1  {+\alpha}} \xi,\quad
    z(0)=\Lambda^{-1}\theta_0,\end{align}
 \begin{align}\label{eqv}  -\partial_t \mathcal{R}f +\Lambda^{\gamma_1}\mathcal{R}  z-\Lambda^{\gamma} \mathcal{R} z+ \Lambda (f+z) \nabla^{\perp} (f+z) \approx
    \Lambda^{\gamma} \mathcal{R} f,\quad
    f(0)=0,\end{align}
with  an  additional parameter $\gamma_{1}\in (1+2\alpha, 2-2\alpha)$.
 {The key point here is} that $\gamma_{1}$ can  generally be chosen  bigger than $\gamma$ in order to increase the regularity of $z$ so that the convex integration construction can be applied to \eqref{eqv}.

	Recall that the conventional Da Prato--Debussche technique,  {i.e. the case $\gamma_{1}=\gamma$,} is  {used to remove the most irregular part $\Lambda^{-1+\alpha}\xi$} from the equation in the subcritical regime. In the critical or supercritical regime, as the nonlinear term is worse than the noise this trick breaks down. Moreover, when $\gamma_1$ is not equal to $\gamma$, the extra term $\Lambda^{\gamma_1}z$ appears in the nonlinear equation.  This additional term formally possesses the same spatial regularity as  {$\Lambda^{-1+\alpha}\xi$}. Despite this, the temporal irregularity is eliminated, leading to an improvement in the  {time} regularity of  {$\Lambda^{\gamma_{1}}z$ compared to $\Lambda^{-1+\alpha}\xi$.}  {To summarize,} this modified version of the Da Prato--Debussche technique effectively converts the irregularities in time into irregularities in space.
 {Moreover,  we further profit from $z$ enjoying higher regularity compared to the case $\gamma_{1}=\gamma$, which is essential for the treatment of several other terms in \eqref{eqv}}.

 It will be shown in Section~\ref{s:proba} that
    $z\in CC^{\gamma_1/2 {-\alpha}-\kappa}$ for every $\kappa>0$ $\bf{P}$-a.s. Moreover, \eqref{eqv} contains an analytically ill-defined product $\Lambda z\nabla^{\perp}z$ which we will make sense of by means of probabilistic renormalization. Namely, if $\eta>\gamma_{1}/2-\alpha$ we will define the analytically ill-defined product $:\Lambda z \nabla^{\perp} z:\in C_T C^{\gamma_1 {-2\alpha}-2-\kappa}\cap C^{1-1/\gamma_1 {-2\alpha/\gamma_1}-\kappa}_TC^{-1+\kappa}$ for every $\kappa>0$. Heuristically, for  the convex integration we require both $\Lambda^{\gamma_1}z$ and $:\Lambda z \nabla^{\perp} z:$ to belong to $C^{-1+}$. Thus, we are led to further conditions $-\gamma_1/2-\alpha>-1$ and $\gamma_1-2\alpha-2>-1$, that is, $\gamma_{1}\in (1+2\alpha, (2-2\alpha)\wedge (2\eta+2\alpha))$ and $\alpha\in [0,1/4)$.

\section{Probabilistic construction}
\label{s:proba}

In this section, we study the linear equation \eqref{eqz} and perform renormalization in order to define $:\Lambda z \nabla^{\perp} z:$.

      \bp\label{fe z}
      Suppose that \eqref{eq:u0} holds for some $r\geq 1$ and $\eta>1/2$.
If $\kappa>0$ small enough then there exists $L\geq 1$ such that
  \begin{equation}\label{z:1}
  	\$z\$^{2r}_{C^{\gamma_1/2-\kappa- {\alpha}},2r}+\$z\$^{2r}_{C_{t}^{\frac{1-\alpha}{\gamma_1}-\frac12-\kappa}C^{\gamma_1-1+\kappa}, 2r}+ {\$z\$^{2r}_{C_{t}^{\frac12-\frac{\gamma+\alpha-1}{\gamma_1}-\kappa}C^{\gamma-1+\kappa},2r}}+\$z\$^{2r}_{ {C_{t}^{1/2-\kappa} C^{-\alpha-\kappa},2r}}\leq L^{2r}.
  \end{equation}
  \ep

  \begin{proof}
 This result is nowadays standard in the field of singular SPDEs. For reader's convenience we present some  details and we follow the notation of \cite[Section 9]{GP17}.
Let $W$ denote the spatial Fourier transform of the space-time white noise $\xi$. Then $W$ is a space-time white noise in Fourier space in the sense that $W$ is a complex-valued centered Gaussian process on $\mathbb{R}^{+}\times (\mathbb{Z}^{2}\setminus\{0\})$ defined by the covariance
	\begin{align*}
	&\E\left[ \int_{\mathbb{R}^{+}\times (\mathbb{Z}^{2}\setminus\{0\})} f(s,k)W(\dif s\dif k) \int_{\mathbb{R}^{+}\times (\mathbb{Z}^{2}\setminus\{0\})} g(s,k)W(\dif s\dif k)\right]
	\\&=(2\pi)^{-2} \int_{\mathbb{R}^{+}\times (\mathbb{Z}^{2}\setminus\{0\})} f(s,k) g(s,-k)\dif s\dif k,
		\end{align*}
where $(s,k)\in \mathbb{R}^{+}\times (\mathbb{Z}^{2}\setminus\{0\})$, $\dif s\dif k$ on the right hand side denotes the product of the Lebesgue measure on $\mathbb{R}^{+}$ and the counting measure on $\mathbb{Z}^{2}\setminus\{0\}$, and  $f,g$ are arbitrary  complex-valued functions in  $ L^{2}(\mathbb{R}^{+}\times (\mathbb{Z}^{2}\setminus\{0\}))$.

With this notations, we observe that the solution to \eqref{eqz} is given by the mild formulation
  	\begin{align*}
  		z(t,x)=e^{-t\Lambda^{\gamma_1}}\Lambda^{-1}\theta_0+\int_{\mR^+\times (\mZ^2\backslash\{0\})} e^{ik\cdot x}e^{-|k|^{\gamma_1}(t-s)}\1_{s\leq t}|k|^{\alpha-1}W(\dif s\dif k)=:z_1(t,x)+z_2(t,x).
  	\end{align*}
  The regularity of $z_1$ is $C([0,\infty);C^{\eta+1})\cap C_t^{1/2}C^{\eta+1-\gamma_1/2}$ for all $t\geq 0$ as a consequence of the  standard Schauder estimate (see e.g. \cite[Lemma 2.8]{ZZZ22}).
Regarding the stochastic convolution $z_{2}$, we consider the Littlewood--Paley blocks $(\Delta_{j})_{j\geq-1}$ and  have for $j\geq-1$
  	\begin{align*}
  		\E|(\Delta_j z_2)(t,x)|^2\lesssim \int_{\mR^+\times (\mZ^2\backslash\{0\})} \theta_j(k)^2e^{-2|k|^{\gamma_1}(t-s)}\1_{s\leq t}|k|^{2\alpha-2} \dif s\dif k\lesssim 2^{(2\alpha-\gamma_1)j},
  	\end{align*}
  	where  $(\theta_j)_{j\geq-1}$ is the dyadic partition of
  	unity associated to the Littlewood--Paley blocks. Therefore, in view of the  Gaussian hypercontractivity we obtain for every $p\geq1$ and $t\geq0$
  	\begin{align*}
  		\E\|z_2(t)\|_{B^{\frac{\gamma_1-\kappa}2-\alpha}_{p,p}}^p\leq (p-1)^{p/2}\tilde{L}^p,
  	\end{align*}
	for some $\tilde L>0$.
  	By Besov embedding we deduce that
  	\begin{align*}
  		\E\|z_2\|_{C^{\frac{\gamma_1}2-\alpha-\kappa}}^p\leq (p-1)^{p/2}\tilde{L}^p,
  	\end{align*}
	where $\tilde L$ may change from line to line.
 As the next step,  it is standard to consider the time increment $z_2(t)-z_2(r)$ and rewrite it in the case of  $0\leq r\leq t$ as
  	\begin{align*}
  		&z_2(t,x)-z_2(r,x)
  		\\&=\int_0^te^{-|k|^{\gamma_1}(t-s)}\int e^{ik\cdot x}|k|^{\alpha-1}W(\dif s\dif k )-\int_0^re^{-|k|^{\gamma_1}(r-s)}\int e^{ik\cdot x}|k|^{\alpha-1}W(\dif s\dif k)
  		\\&=\int_0^r(e^{-|k|^{\gamma_1}(t-s)}-e^{-|k|^{\gamma_1}(r-s)})\int e^{ik\cdot x}|k|^{\alpha-1}W(\dif s\dif k)
  		\\&\qquad+\int_r^te^{-|k|^{\gamma_1}(t-s)}\int e^{ik\cdot x}|k|^{\alpha-1}W(\dif s\dif k).
  	\end{align*}
  	When we now calculate $\E|\Delta_j z_2(t,x)-\Delta_jz_2(r,x)|^2$, we have one factor $e^{-|k|^{\gamma_1}(t-s)}-e^{-|k|^{\gamma_1}(r-s)}$ from the first term on the right hand side, which can be controlled by $e^{-|k|^{\gamma_1}(r-s)}[(|k|^{\gamma_1}|t-r|)\wedge 1]$. We also have another factor $|1-e^{-|k|^{\gamma_1}(t-r)}|$, which can also be controlled by $(|k|^{\gamma_1}|t-r|)\wedge 1$.
  	Hence, we obtain  for $\delta\in [0,1]$
  	\begin{align*}
  		\E|\Delta_j z_2(t,x)-\Delta_jz_2(r,x)|^2\lesssim|t-s|^\delta
  		2^{j(\gamma_1\delta+2\alpha-\gamma_1)},
  	\end{align*}
  	where the implicit constant is independent of $t$ and $r$.
  	Using again the Gaussian hypercontractivity together with Kolmogorov's continuity criterion \eqref{z:1} follows by choosing $\delta=1-\kappa$ as well as $$\frac2{\gamma_1}-\frac{2\alpha}{\gamma_1}-1-2\kappa<\delta<\frac2{\gamma_1}-\frac{2\alpha}{\gamma_1}-1-\frac{2\kappa}{\gamma_1},\qquad
1-\frac{2(\gamma+\alpha-1)}{\gamma_1}-2\kappa<\delta<1-\frac{2(\gamma+\alpha-1)}{\gamma_1}-\frac{2\kappa}{\gamma_1}.$$
  \end{proof}

Next, we construct the renormalized  product $:\Lambda z\nabla^{\perp}z:$. To this end, we denote by $z_{\varepsilon}$ a spatial mollification of $z$ and define the associated renormalized product as
$$
:\Lambda z_{\eps}\nabla^{\perp} z_{\eps}:=\Lambda z_{\eps} \nabla^{\perp} z_{\eps}-\E (\Lambda z_{2,\eps} \nabla^{\perp} z_{2,\eps}),
$$
where $z_{2,\eps}$ is the spatial mollification of $z_2$ defined in the proof of Proposition \ref{fe z}.
We will see in Proposition~\ref{lem:re} that by choosing a suitable mollifier the above expected value is zero by symmetry.

  \bp\label{lem:re}
  Suppose that \eqref{eq:u0} holds with $r\geq 1$ and $\eta>\frac{\gamma_1}2-\alpha$.
  There exists random distribution $:\Lambda z \nabla^{\perp} z:\in C_TC^{\gamma_1 {-2\alpha}-2-\kappa}$ $\bP$-a.s. for every $\kappa>0$, such that $:\Lambda z_\eps \nabla^{\perp} z_\eps:\to:\Lambda z \nabla^{\perp} z:$ in $C_TC^{\gamma_1 {-2\alpha}-2-\kappa}$ $\bP$-a.s. for every $\kappa>0$.
  Furthermore, if  $\kappa>0$ small enough then there exists $L\geq 1$ such that
  \begin{equation*}
\sup_{t\geq0}\mathbf{E}\left[\sup_{t\leq s\leq t+1}\|:\Lambda z \nabla^{\perp} z:(s)\|^{r}_{C^{\gamma_1 {-2\alpha}-2-\kappa}}+\|:\Lambda z \nabla^{\perp} z:\|^{r}_{C^{1-1/\gamma_1 {-2\alpha/\gamma_1}-\kappa}_tC^{-1+\kappa}}\right]\leq L^{r}.
  \end{equation*}
  \ep

  \begin{proof}
  We recall from the proof of Proposition~\ref{fe z} that $z$ splits into its initial part $z_{1}$ and the stochastic convolution $z_{2}$. Let $z_{2,\eps}$ be a space mollification of $z_2$. By a direct calculation we see that
  	\begin{align*}
  		\E(\Lambda z_{2,\eps}  \nabla^{\perp} z_{2,\eps} )=\int_{\mR\times (\mZ^2\backslash\{0\})} e^{-2|k|^{\gamma_1}(t-s)}\varphi_\eps(|k|)^2\1_{0\leq s\leq t}|k|^{2\alpha}\frac{k^{\perp}}{|k|}\dif s\dif k=0,
  	\end{align*}
because the integrand is antisymmetric under the change of variables $k\mapsto -k$. Here,  $\varphi_\eps(|k|)=\varphi(\eps|k|)$ for a smooth compactly supported function $\varphi$ with $\varphi(0)=1$. In particular, the mollification is given by a radially symmetric cut-off in Fourier space.

 Thus, the renormalized product
  \begin{align*}
  	:\Lambda z_{2,\eps}\nabla^{\perp} z_{2,\eps}:=\Lambda z_{2,\eps} \nabla^{\perp} z_{2,\eps}-\E (\Lambda z_{2,\eps} \nabla^{\perp} z_{2,\eps}),
  \end{align*}
reads as
  	\begin{align*}
  		:\Lambda z_{2,\eps}  \nabla^{\perp} z_{2,\eps} :(t,x)
&=\int e^{i(k_1+k_2)x}e^{-|k_1|^{\gamma_1}(t-s_1)}e^{-|k_2|^{\gamma_1}(t-s_2)}\1_{0\leq s_1\leq t}\1_{0\leq s_2\leq t}\\
		&\qquad\qquad\times\varphi_\eps(|k_2|)\varphi_\eps(|k_1|)|k_1|^\alpha|k_2|^\alpha\frac{k_2^{\perp}}{|k_2|}W(\dif \eta_1)W(\dif \eta_2),
  	\end{align*}
  	where $\eta_i=(s_i,k_i)\in \mR^{+}\times (\mZ^2\backslash\{0\})$ for $i=1,2$.	
  	 Similarly to  the proof of Proposition \ref{fe z} it holds
  	for $j\geq-1$
  	\begin{align*}
  		&
  		\E|\Delta_j (:\Lambda z_{2,\eps}  \nabla^{\perp} z_{2,\eps} :)(x)|^2
  		\\\lesssim& \int \theta_j(k_1+k_2)^2e^{-2|k_1|^{\gamma_1}(t-s_1)}e^{-2|k_2|^{\gamma_1}(t-s_2)}\1_{0\leq s_1,s_2\leq t}\varphi_\eps(|k_1|)^2\varphi_\eps(|k_2|)^2|k_1|^{2\alpha}|k_2|^{2\alpha}\dif \eta_1\dif \eta_2
  		\\\lesssim& \int \theta_j(k_1+k_2)^2\frac1{|k_1|^{\gamma_1-2\alpha}|k_2|^{\gamma_1-2\alpha}}\dif k_{1}\dif k_{2}\lesssim 2^{j(4-2\gamma_1+4\alpha)},
  	\end{align*}
  	where we used \cite[Lemma 3.10]{ZZ15} in the last step. Thus, using the Gaussian hypercontractivity, Besov embedding and Kolmogorov's continuity criterion and a similar argument as in Proposition~\ref{fe z}, we obtain the desired regularity for $:\Lambda z_{2,\varepsilon} \nabla^{\perp} z_{2,\varepsilon}:$ uniformly in the mollification parameter $\varepsilon$, which also guarantees the existence of a limit as $\varepsilon\to 0$. We denote this limit as $:\Lambda z_2\nabla^{\perp} z_2:$.
	
	Consequently,  we  define
 \begin{align*}
  :\Lambda z \nabla^{\perp} z:=\Lambda z_1 \nabla^{\perp} z_1+\Lambda z_1 \nabla^{\perp} z_2+\Lambda z_2 \nabla^{\perp} z_1+:\Lambda z_2\nabla^{\perp} z_2:
  \end{align*}
Since $z_1\in C([0,\infty);C^{\eta+1})\cap C^{1/2}_tC^{\eta+1-\gamma_1/2}$ with $\eta>1/2>1+\alpha-\gamma_1/2$, we have
\begin{align*}
&	\E\left[\sup_{t\leq s\leq t+1}\Big(\|\Lambda z_1 \nabla^{\perp} z_1\|_{C^{\gamma_1-2\alpha-2-\kappa}}^r+\|\Lambda z_1 \nabla^{\perp} z_2\|_{C^{\gamma_1-2\alpha-2-\kappa}}^r+\|\Lambda z_2 \nabla^{\perp} z_1\|_{C^{\gamma_1-2\alpha-2-\kappa}}^r\Big)\right]
	\\&\lesssim\E\left[\sup_{t\leq s\leq t+1}\Big(\| \theta_0 \|_{C^{\eta}}^{2r}+\| \theta_0 \|_{C^\eta}^r\| z_2\|_{C^{\gamma_1/2-\alpha-\kappa}}^r\Big)\right]\lesssim L^r.
\end{align*}
Similarly, using $\eta>\gamma_1/2-\alpha$
we have
\begin{align*}
	&	\E\Big[\|\Lambda z_1 \nabla^{\perp} z_1\|_{C_t^{1-1/\gamma_1-2\alpha/\gamma_1-\kappa}C^{-1+\kappa}}^r+\|\Lambda z_1 \nabla^{\perp} z_2\|_{C_t^{1-1/\gamma_1-2\alpha/\gamma_1-\kappa}C^{-1+\kappa}}^r
	\\&\qquad+\|\Lambda z_2 \nabla^{\perp} z_1\|_{C_t^{1-1/\gamma_1-2\alpha/\gamma_1-\kappa}C^{-1+\kappa}}^r\Big]
	\\&\lesssim\E\Big[\| \theta_0 \|_{C^{\eta}}^{2r}+\| \theta_0 \|_{C^\eta}^r\Big(\| z_2\|_{C_t^{1-1/\gamma_1-2\alpha/\gamma_1-\kappa}C^{1+\alpha-\gamma_1/2}}^r+\| z_2\|_{C_tC^{\gamma_1/2-\alpha-\kappa}}^r\Big)\Big]\lesssim L^r.
\end{align*}
 Here, we used the interpolation and Proposition \ref{fe z} to bound $\| z_2\|_{C_t^{1-1/\gamma_1-2\alpha/\gamma_1-\kappa}C^{1+\alpha-\gamma_1/2}}$ and
$\| z_1\|_{C_t^{1-1/\gamma_1-2\alpha/\gamma_1-\kappa}C^{2+2\alpha+\eta-\gamma_1}}$.
Hence, the result follows.
  \end{proof}

\begin{remark}
We observe that ignoring the initial condition, the constant $L$ in both Proposition~\ref{fe z} and Proposition~\ref{lem:re} is determined by the Gaussianity of the noise together with certain universal estimates which depend on the function spaces at hand, such as the Besov embedding.
\end{remark}

\section{Analytic construction}
\label{s:a}

In this section, we develop our iterative procedure which leads to the construction of solutions to \eqref{eqv}.
We define $M_L:=CL^2$ for a universal constant $C>0$ given below and $L\geq 1$ given as the maximum of the constants $L$ appearing in Proposition~\ref{fe z} and Proposition~\ref{lem:re}.
    We consider an increasing sequence $\{\lambda_n\}_{n\in\mathbb{N}_{0}}$ which diverges to $\infty$, and a sequence $\{r_n\}_{n\in \mathbb{N}}$  which is decreasing to $0$. Specifically, we choose $a\in\mathbb{N},$ $ b\in\mathbb{N},$ $  \beta\in (0,1)$ and let
$$\lambda_n=a^{(b^n)}, \quad r_n=M_L\lambda_0^\beta\lambda_n^{-\beta},\quad \ell_n=\lambda^{-1}_n,\quad \mu_n=\lambda_n^{\zeta},$$
where  $\beta$ will be chosen sufficiently small, $a$ sufficiently large and $\zeta$ sufficiently small.
We first assume that
$$\sum_{m\in\mathbb{N}_{0}}r^{1/2}_{m}\leq M^{1/2}_L+M^{1/2}_L\sum_{m\geq1}a^{\beta/2-mb\beta/2}{\leq}M_L^{1/2}+\frac{M_L^{1/2}}{1-a^{-b\beta/2}}\leq 3M^{1/2}_L,$$
and
$$
\sum_{m\in\mathbb{N}}\lambda_0^{\beta/2}\lambda^{-{\beta}/{4b}}_{m}\leq \lambda_0^{\beta/4}+\lambda_0^{\beta/4}\sum_{m\geq1}a^{\beta/4-mb\beta/4}\leq\lambda_0^{\beta/4}+\frac{\lambda_0^{\beta/4}}{1-a^{-b\beta/4}}\leq 3\lambda_0^{\beta/4},$$
which requires
\begin{align}\label{para}a^{b\beta}\geq 16.
\end{align}
Further necessary conditions on these parameters will be given below, in particular in Section~\ref{s:p}.

The iteration is indexed by a parameter $n\in\mathbb{N}_0$. At each step $n$, a pair $(f_{  n},q_n)$ is constructed and it solves the approximate equation
\begin{align}\label{induction ps}
\begin{aligned}
-\partial_t \mathcal{R}f_{  n} + \Lambda   f_n
   \nabla^{\perp}   f_n   +\Lambda z_n \nabla^{\perp}   f_n  &+\Lambda   f_n   \nabla^{\perp} z_n\\+
   P_{\leq \lambda_n}(:\Lambda z \nabla^{\perp} z:+\Lambda^{\gamma_1}\mathcal{R}  z-\Lambda^{\gamma}\mathcal{R}  z) &\approx \Lambda^{\gamma } \mathcal{R}   f_n   +
   \nabla q_n,\\
     f_n  (0)&=0,
   \end{aligned}
   \end{align}
   where $z_n=P_{\leq \lambda_n} z$ with $P_{\leq\lambda_{n}}$ introduced in Section~\ref{s:not}.
In comparison to \eqref{eqv}, this equation includes the additional error $ q_{n}$, which shall eventually vanish with $n\to\infty$, as well as the additional projection $P_{\leq \lambda_n}$. Both $  f_n  $ and $ q_{n}$ are compactly supported in Fourier on frequencies of order at most $\lambda_{n}$. We observe that the noise is added in \eqref{induction ps} scale by scale. This is needed in order to preserve the frequency localization.
Applying the  divergence to \eqref{induction ps} yields therefore  an approximation of \eqref{eq:f}. Due to the application of divergence, we only need to consider the mean-free parts of all the terms in \eqref{induction ps}. For notational simplicity, we do so without further modifying the notation.

As the next step, we start the iteration by letting $f_{  0}=0$.
Then,  \eqref{induction ps} yields
  $$\nabla q_0 \approx
   P_{\leq \lambda_0}(:\Lambda z
   \nabla^{\perp} z: +\Lambda^{\gamma_1}z-\Lambda^\gamma z) ,$$
  and taking $q_{0}$ mean-free we estimate for $\kappa>0$
  $$\|q_0\|_X\lesssim \|z\|_{C^{\gamma_1/2-\kappa {-\alpha}}}+\|:\Lambda z
   \nabla^{\perp} z:\|_{C^{\gamma_1-2-\kappa {-2\alpha}}}.$$
Here, we used $\gamma_1-2\alpha-2>-1$, $-\frac{\gamma_1}{2}-\alpha>-1$, $\gamma_1/2-\gamma-\alpha>-1$.
   Thus,  Proposition \ref{fe z} and Proposition~\ref{lem:re} imply
   \begin{align*}
\left(\sup_{t\geq 0}\mathbf{E}\sup_{s\in[t,t+1]}\|q_0(s)\|^r_{X}\right)^{1/r}\leq CL^2\leq
M_L.
\end{align*}

Given $  f_0  ,\dots   f_n  $, the next iteration  $ f_{  n+1}     $ is constructed through an induction argument so that the following estimates  hold true for every $n\in\mathbb{N}_{0}$
 \begin{equation}\label{iteration ps}
\left(\sup_{t\geq 2^{-n+2}}\mathbf{E}\sup_{s\in[t,t+1]}\| f_{  n+1}     (s)-  f_n  (s)\|^{2r}_{B^{1/2}_{\infty,1}}\right)^{1/2r}\leq
M_0 r_{n}^{1/2}+r^{1/2}_{n+1},
\end{equation}
 \begin{equation}\label{iteration ps1}
\left(\mathbf{E}\sup_{s\in[2^{-n-1},2^{-n+2}]}\| f_{  n+1}     (s)-  f_n  (s)\|^{2r}_{B^{1/2}_{\infty,1}}\right)^{1/2r}\leq
M_0 M_L^{1/2}+r^{1/2}_{n+1},
\end{equation}
 \begin{equation}\label{iteration ps2}
\left(\mathbf{E}\sup_{s\in[0,2^{-n-1}]}\| f_{  n+1}     (s)-  f_n  (s)\|^{2r}_{B^{1/2}_{\infty,1}}\right)^{1/2r}\leq
r^{1/2}_{n+1},
\end{equation}
and for $1/2<\vartheta<1/2+\beta/4b$
\begin{align}\label{induction fn}\left(\sup_{t\geq 2^{-n+2}}\mathbf{E}\sup_{s\in[t,t+1]}\| f_{  n+1}     (s)-  f_n  (s)\|^{2r}_{C^{\vartheta}}\right)^{1/2r}\leq M_0M_L^{1/2}\lambda_0^{\beta/2}\lambda^{-{\beta}/{4b}}_{n+1}+r^{1/2}_{n+1},
\end{align}
and for $\delta>\beta/2$
\begin{align}\label{induction w}
\$ f_{  n+1}     -  f_n  \$_{B^{1/2-\delta}_{\infty,1},2r}\leq r^{1/2}_{n+1},
\end{align}
and
\begin{align}\label{iteration R}
\left(\sup_{t\geq 2^{-n}}\mathbf{E}\sup_{s\in[t,t+1]}\|q_{n+1}(s)\|^r_{X}\right)^{1/r}\leq
r_{n+1},
\end{align}
\begin{align}\label{iteration R1}
\left(\mathbf{E}\sup_{s\in[0,2^{-n}]}\|q_{n+1}(s)\|^r_{X}\right)^{1/r}\leq M_L+\sum_{k=1}^{n+1}r_{k}\leq 3M_L,
\end{align}
and
\begin{equation}\label{inductionv ps}
\$ f_{  n+1}     (t)\$_{B^{1/2}_{\infty,1},2r}\leq10M_0M_L^{1/2}
\end{equation}
for a constant $M_0$ independent of $a,b,\beta,M_L$, and
\begin{align}\label{eq:C1}
\$ f_{  n+1}     \$_{C^{1/2}_tB^{1/2}_{\infty,1},2r}\leq M_0M^{1/2}_L\lambda^{1/2}_{n+1},
\end{align}
and for $\delta>\beta/2$
\begin{align}\label{induction wf}\| f_{  n+1}     \|_{C_{t}^{1/2}B^{-\delta}_{\infty,1},2r}\leq M_L^{1/2}+M_0M^{1/2}_L\sum_{k=1}^{n+1}\lambda_{k}^{-\delta}\leq M_L^{1/2}+3M_0M_L^{1/2}.\end{align}

\begin{proposition}\label{p:iteration}
Assume \eqref{eq:u0} holds true for some $r\geq1$ and $\eta>1/2$. There exists a choice of parameters $a, b, \beta,\zeta$ such that the following holds true: Let $( f_i   ,q_i)_{i=0}^n$ for some $n\in\N_{0}$ be  $(\mathcal{F}_t)$-adapted so that
\begin{enumerate}
\item for every $i=\{0,\dots,n\}$, $( f_i   ,q_i)$ solves \eqref{induction ps} with $n$ replaced by $i$,
\item for every $i=\{0,\dots,n\}$ and every $t\geq0$, the frequencies of $ f_i   (t)$ and $q_i(t)$ are localized in a ball of radius $\leq 6\lambda_i$ and $\leq 12\lambda_i$, respectively,
\item the estimates \eqref{iteration R}-\eqref{induction wf} hold true with $n+1$ replaced by $i\in \{0,\dots,n\}$,
\item the estimates  \eqref{iteration ps}-\eqref{induction w} hold true with $n$ replaced by $i\in \{0,\dots,n-1\}$.
\end{enumerate}
 Then    there exists an $(\mathcal{F}_t)$-adapted  $( f_{  n+1}     ,q_{n+1})$ which solves \eqref{induction ps} on the level $n+1$ and such that for every $t\geq 0$ the frequencies of $ f_{  n+1}     (t)$ and $q_{n+1}(t)$ are localized in a ball of radius $\leq 6\lambda_{n+1}$ and $\leq 12\lambda_{n+1}$, respectively, and the estimates
 \eqref{iteration ps}-\eqref{induction wf} are satisfied.

Moreover, for any $\eps>0$ we can choose the parameter $a$ large enough depending on $M_L$ such that
\begin{align}\label{induction w1}\$ f_{  n+1}     -  f_n  \$_{B^{1/2-\delta}_{\infty,1},2r}\leq \eps/2^{n+1}.
\end{align}
\end{proposition}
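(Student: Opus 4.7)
My plan is to carry out one step of a convex integration scheme on the scalar level $v=\Lambda^{-1}\theta$, in the spirit of \cite{CKL21} and \cite{HZZ22}, now supplemented so as to absorb the new noise-dependent inputs in \eqref{induction ps}. The key object is a perturbation $w_{n+1}:=f_{n+1}-f_n$ built from sharply frequency-localized building blocks near $\lambda_{n+1}$ whose quadratic self-interaction cancels the leading part of the old error $q_n$. Adaptedness of the output will be automatic because $w_{n+1}$ is a deterministic functional of the $(\cF_t)$-adapted data $(f_n,q_n,z_{n+1},z)$.

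I would first mollify $(f_n,q_n)$ in space and time at scale $\ell_n=\lambda_n^{-1}$ to obtain $(f_{n,\ell},q_{n,\ell})$ with $C^1_t$-derivatives up to polynomial losses in $\lambda_n$, using a sharp Fourier projection so that frequency localization is preserved. Following \cite{CKL21} I would then define a principal part $w_{n+1}^{(p)}$ and a divergence corrector $w_{n+1}^{(c)}$ from building blocks oscillating at rate $\mu_n=\lambda_n^{\zeta}$ along disjoint directions, with amplitudes proportional to $|q_{n,\ell}|^{1/2}$; a smooth temporal cut-off will enforce $w_{n+1}\equiv 0$ on $[0,2^{-(n+1)}]$, preserving $f_{n+1}(0)=0$ and accounting for the three-interval split in \eqref{iteration ps}-\eqref{iteration ps2}. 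Setting $f_{n+1}:=f_n+w_{n+1}^{(p)}+w_{n+1}^{(c)}$, inserting into \eqref{induction ps} at level $n+1$ and subtracting the equation at level $n$ would produce a decomposition of $\nabla q_{n+1}$ into: a linear error $(-\partial_t\mathcal{R}+\Lambda^{\gamma}\mathcal{R})w_{n+1}$; a transport error from $\Lambda(f_n+z_{n+1})\nabla^{\perp}w_{n+1}+\Lambda w_{n+1}\nabla^{\perp}(f_n+z_{n+1})$; a Nash/oscillation error $\Lambda w_{n+1}\nabla^{\perp}w_{n+1}+\nabla q_{n,\ell}$; a mollification error $\nabla(q_{n,\ell}-q_n)$; a noise-frequency increment $(P_{\leq\lambda_{n+1}}-P_{\leq\lambda_n})(:\Lambda z\nabla^{\perp}z:+\Lambda^{\gamma_1}\mathcal{R}z-\Lambda^{\gamma}\mathcal{R}z)$; and a residual $\Lambda(z_{n+1}-z_n)\nabla^{\perp}f_n+\Lambda f_n\nabla^{\perp}(z_{n+1}-z_n)$. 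Each piece is inverted by a non-local divergence operator returning an $X$-valued potential, exploiting boundedness of the Riesz-type multipliers $\mathcal{R}_j^o$ on dyadic annuli.

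The bounds \eqref{iteration ps}-\eqref{induction wf} on $w_{n+1}$ follow from the explicit block formulas together with Littlewood--Paley theory, using only \eqref{inductionv ps} at level $n$ together with Propositions \ref{fe z} and \ref{lem:re} for the stochastic inputs. The decisive bound \eqref{iteration R} on $q_{n+1}$ in $X$ is obtained term-by-term, trading the oscillation gain $\sim\mu_n/\lambda_{n+1}=\lambda_n^{\zeta-b}$ against mollification losses that are polynomial in $\lambda_n$ and the frequency gaps coming from high-low interactions with $z$. Fixing $\beta$ small, $\zeta$ small, then $b$ large enough so that the oscillation gain dominates, and finally $a$ large enough to absorb constants depending on $M_L$, yields all stated inequalities at level $n+1$; the strengthening \eqref{induction w1} is then obtained by enlarging $a$ once more so that $\lambda_{n+1}^{-\delta+\beta/2}$ beats $\eps/2^{n+1}$, which is possible since $\delta>\beta/2$.

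The hardest point will be the noise-induced contributions to $q_{n+1}$, in particular the products $\Lambda z_{n+1}\nabla^{\perp}w_{n+1}$ and the increment $(P_{\leq\lambda_{n+1}}-P_{\leq\lambda_n})(:\Lambda z\nabla^{\perp}z:)$. Unlike the building blocks, $z_{n+1}$ is not lacunary in Fourier space, so such products may carry energy at any frequency $\lesssim\lambda_{n+1}$ and the $X$-norm cannot absorb resonances with the blocks. The remedy, as foreshadowed in the introduction, is to further split $z_{n+1}$ into a high-frequency annular piece (of scale $\sim\lambda_{n+1}$) and a low-frequency core: the high-high product lands well above $\lambda_{n+1}$, so the non-local inverse divergence gains $\lambda_{n+1}^{-1}$, while the low-high product inherits the slow variation of the core. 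The quantitative constraints produced by this decomposition, combined with the regularities $z\in CC^{\gamma_1/2-\alpha-\kappa}$ and $:\Lambda z\nabla^{\perp}z:\in CC^{\gamma_1-2\alpha-2-\kappa}$, are precisely what force the ranges $\gamma_1\in(1+2\alpha,2-2\alpha)$ and $\alpha\in[0,1/4)$, and what makes the full range $\gamma\in[0,3/2)$ accessible uniformly across the subcritical, critical and supercritical regimes.
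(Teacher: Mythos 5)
Your skeleton matches the paper's: one-sided time mollification of $(f_n,q_n)$, a CKL21-type perturbation with slowly varying amplitudes proportional to $|q_\ell|^{1/2}$ oscillating at frequency $5\lambda_{n+1}$, a temporal cut-off to preserve the initial datum, and a decomposition of the new error into oscillation, transport, dissipation, time-derivative, commutator, noise-increment and $z$-residual parts. Two points, however, are genuine gaps rather than omitted routine detail. First, the mollification: the paper mollifies only in time, with a \emph{one-sided} mollifier at scale $\ell_{n+1}=\lambda_{n+1}^{-1}$. One-sidedness is what makes $f_\ell(t)$ a functional of the data up to time $t$ (your ``adaptedness is automatic'' claim fails for a symmetric kernel) and keeps $f_\ell(0)=0$. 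Moreover your scale $\ell_n=\lambda_n^{-1}$ is too coarse: the mollification error in $B^{1/2}_{\infty,1}$ is of order $\ell^{1/2}\|f_n\|_{C^{1/2}_tB^{1/2}_{\infty,1}}\lesssim \ell^{1/2}M_0M_L^{1/2}\lambda_n^{1/2}$, which for $\ell=\lambda_n^{-1}$ is $O(M_L^{1/2})$ rather than $O(r_{n+1}^{1/2})$; since on $[0,2^{-n-1}]$ the cut-off annihilates the perturbation and the whole difference $f_{n+1}-f_n$ reduces to this error, \eqref{iteration ps2} and \eqref{induction w} would fail.

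Second, and more seriously, the $z$-residual $\Lambda(z_{n+1}-z_n)\nabla^{\perp}f_\ell+\Lambda f_\ell\nabla^{\perp}(z_{n+1}-z_n)$. A single high/low split of $z_{n+1}$ cannot close this term: $z_{n+1}-z_n$ has Fourier support in $[\lambda_n/2,\lambda_{n+1}]$ and $f_\ell$ up to $6\lambda_n$, so the product resonates down to frequency zero, where $\Delta^{-1}\nabla\cdot$ gains nothing, while the crude bound $\|\Lambda(z_{n+1}-z_n)\|_{L^\infty}\|\nabla^{\perp}f_\ell\|_{L^\infty}\lesssim \lambda_{n+1}^{1-\gamma_1/2+\alpha+\kappa}\lambda_n^{1/2}M_L$ diverges because $\gamma_1<2-2\alpha$ forces $1-\gamma_1/2+\alpha>0$. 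The paper's resolution is to split $f_\ell$ into $f_{\ell_n}\ast\varphi_\ell$ (frequencies $\leq 6\lambda_{n-1}$) plus the previous perturbation $g_n\ast\varphi_\ell$; to decompose $z_{n+1}-z_n$ into $N\sim b/\zeta$ annuli of radii $10\mu_n^{l}\lambda_n$, playing the lower frequency bound (inverse-divergence gain) against the upper bound (smallness from $z\in C^{\gamma_1/2-\alpha-\kappa}$) on each annulus; and, for the $g_n$ part, to exploit the exact cancellation $g\nabla^{\perp}h+h\nabla^{\perp}g=\nabla^{\perp}(gh)\approx 0$ after replacing $\Lambda g_n$ by $5\lambda_n g_n$ via Lemma~\ref{lem:21} and Lemma~\ref{lem:33}, together with a decomposition localized around the carrier frequencies $\pm 5\lambda_n l_j$ of $g_n$. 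This is precisely why the proposition assumes the inductive bounds at \emph{all} previous levels (the proof invokes $q_{n-1}$ and $g_n$), contrary to your claim that only \eqref{inductionv ps} at level $n$ is needed; and the term you single out as hardest, $\Lambda z_{n+1}\nabla^{\perp}g_{n+1}$, is in fact among the easiest, being automatically supported at frequencies comparable to $\lambda_{n+1}$.
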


The detailed proof of this key technical result is presented in Section~\ref{s:it}. Nevertheless, in order to facilitate the first  reading of the manuscript, the reader may assume that it has already been proven and directly continue with Section~\ref{s:1.1} and Section~\ref{s:in}. Here, we apply Proposition~\ref{p:iteration}  to establish our main results concerning the solutions to the initial value problem as well as stationary and ergodic stationary solutions.

\section{Detailed iteration -- proof of Proposition~\ref{p:iteration}}
\label{s:it}

This section is devoted to the proof of Proposition~\ref{p:iteration}, split into several subsections. In the first step, we summarize the conditions on our parameters  needed to recover all the iterative assumptions. After a preliminary step of mollification, we introduce the perturbation $g_{n+1}$ which in turn  permits to define the next iteration $ f_{  n+1}     $. The inductive estimates on $ f_{  n+1}     $ are proved in Section~\ref{s:8.4}. The new error $q_{n+1}$ is implicitly defined via the mollification of  \eqref{induction ps} on the level $n$ and \eqref{induction ps} on the level $n+1$. The  inductive estimates on $q_{n+1}$ are established in Section~\ref{s:estq}. While some estimates are similar to our previous work \cite{HZZ22} or to the deterministic result \cite{CKL21}, major difficulties arise in the treatment of certain terms  including the linear solution $z$ due to the supercritical nature of our setting. These estimates are very delicate and are among  the main novelties of this manuscript. See in particular the control of $q_{z}$ in Section~\ref{s:estq}.

\subsection{Conditions on parameters}\label{s:p}

In the sequel, we use the following basic bounds on the parameters
$$\beta<3/2-\gamma,\quad \beta+\frac{3}{2b}+\kappa<1/2,\quad \zeta>1/b+\beta/2.$$
The following conditions are necessary for the terms containing $z$ (see Section~\ref{s:estq} below): for the control of  $q_N$, we postulate
$$ (1-\frac{\gamma_1}{2} {-\alpha})\wedge (\gamma_1-1-2\alpha)\wedge (1+\frac{\gamma_1}{2}-\gamma-\alpha)>b \beta+2\kappa,$$
$$(1-\frac1{\gamma_1}-\frac{2\alpha}{\gamma_1})\wedge (\frac{1}{\gamma_1} -\frac{\alpha}{\gamma_1}-\frac12)\wedge (\frac12-\frac{\gamma+\alpha-1}{\gamma_1})>\beta+\kappa,$$
whereas for the control of  $q_z$ we require
$$\zeta+\beta b+3\kappa<\frac{\gamma_1}2-{\alpha}-\frac12.$$
We can choose  $ {2(\gamma+\alpha-1)<1+2\alpha<\gamma_1<(2-2\alpha)\wedge (2\eta+2\alpha)}$ and  $\zeta>0$ small enough such that
$$\zeta<\frac{\gamma_1}2-{\alpha}-\frac12.$$  Then we choose $b$ large enough such that $\zeta>1/b$ and all the conditions  are satisfied if $\beta$ and $\kappa$ are small enough. Moreover, we can choose $a$ large enough to absorb implicit constant  and such that \eqref{para} holds.

\subsection{Mollification step}

As usual in the number of convex integration schemes, we start with a mollification of the previous iteration $  f_n  $. To this end, we consider  $\{\varphi_\varepsilon\}_{\varepsilon>0}$, a family of standard positive mollifiers with support of $\varphi$ in $(0,1)$. We use one-sided mollifiers as this will  preserve the initial datum as well as  adaptedness. Next, we  extend $  f_n   $ and $ q_n$ to negative times by taking the values at $t=0$. Consequently, the equation also holds for $t<0$ as $\partial_t   f_n  (0)=0$ by our construction. We define the mollification of $  f_n  $, $q_n$  in  time by convolution as follows
	$$f_{\ell}=  f_n  *_t\varphi_{\ell_{n+1}},\qquad
	q_\ell=q_n*_t\varphi_{\ell_{n+1}},$$
	where  $\ell:=\ell_{n+1}=\lambda_{n+1}^{-1}$.
Then both  $f_\ell$ and $q_\ell$ are $(\mathcal{F}_t)$-adapted.
We apply mollification on  both sides of \eqref{induction ps} and obtain
\begin{align}\label{mol}
\begin{aligned}
- \partial_t \mathcal{R}f_{\ell} + \Lambda f_{\ell} \nabla^{\perp}
   f_{\ell}+ \Lambda z_n\nabla^{\perp}
   f_{\ell} + \Lambda f_{\ell} \nabla^{\perp}
  z_n +&P_{\leq\lambda_{n}} [:(\Lambda z \nabla^{\perp}
  z):+\Lambda^{\gamma_1} \mathcal{R}z-\Lambda^{\gamma} \mathcal{R} z]_\ell
   \\&\approx  \Lambda^{\gamma } \mathcal{R} f_{\ell} + \nabla q_{\ell} +
   R_{\tmop{com}} ,\\
   f_\ell(0)&=0,
   \end{aligned}
   \end{align}
where $$R_{\tmop{com}}=\Lambda f_{\ell} \nabla^{\perp}
   f_{\ell}-(\Lambda   f_n  \nabla^\bot   f_n  )*_t\varphi_{\ell}+\Lambda z_n\nabla^{\perp}
   f_{\ell} + \Lambda f_{\ell} \nabla^{\perp}
  z_n-(\Lambda z_n\nabla^{\perp}
     f_n   + \Lambda   f_n   \nabla^{\perp}
  z_n )*_t\varphi_{\ell} .$$
  In view of \eqref{eq:C1} for $f_n$ we obtain for $b(1/2-\beta/2)>\vartheta>1/2$
 \begin{align}\label{mol1}
  \begin{aligned}
\$f_{\ell}-  f_n  \$_{C^{\vartheta},2r}&\lesssim \ell^{1/2}\lambda_n^{\vartheta-1/2}\$  f_n  \$_{C^{1/2}_{t}B_{\infty,1}^{1/2},2r}
  \\&\lesssim M_0{\lambda_{n+1}^{-1/2} M^{1/2}_L\lambda^{\vartheta}_n}\leq \frac14r^{1/2}_{n+1},
  \end{aligned}
  \end{align}
  and
  \begin{align}\label{mol2}
\$f_\ell\$_{C^{1/2}_tB_{\infty,1}^{1/2},2r}\leq \$  f_n  \$_{C^{1/2}_{t}B_{\infty,1}^{1/2},2r}\leq M_0M^{1/2}_L\lambda^{1/2}_n,
  \end{align}
and
for $\delta>\beta/2$ by \eqref{induction wf} for $f_n$
\begin{align}
\label{mol3}
\$f_{\ell}\$_{C^{1/2}_{t}B^{-\delta}_{\infty,1},2r}\leq\$  f_n  \$_{C^{1/2}_{t}B^{-\delta}_{\infty,1},2r}\leq M_L^{1/2}+M_0M^{1/2}_L\sum_{k=1}^n\lambda_{k}^{-\delta}.
\end{align}

\subsection{Perturbation step -- construction of $ f_{  n+1}     $}
\label{sec:conf}

The construction of $ f_{  n+1}     $ is done by perturbing the mollified solution $f_\ell $.
We first introduce a smooth  function
\begin{align*}
\tilde{\chi}(t)=\varphi_\ell*\|q_n\|_X(t)+r_{n}.
\end{align*}
Then $\|q_\ell\|_X\leq \tilde{\chi}$ and
$\|\tilde{\chi}'(t)\|_{C^{0}}\lesssim \ell^{-1}\sup_{s\in[t-\ell,t]}\|q_n(s)\|_X$. We also introduce the following cut-off function
\begin{align*}
	\chi(t)=\begin{cases}
		0,& t\leq 2^{-n-1},\\
		\in (0,1),& t\in (2^{-n-1},2^{-n} ),\\
		1,&t\geq 2^{-n}.
	\end{cases}
\end{align*}
{Here in the middle interval we smoothly interpolate so that} it holds
$\|\chi'\|_{C^{0}}\leq 2^{n+1}$. Now, define the perturbation $g_{n+1}$ by
\begin{equation}\label{realg}
	g_{n+1}=\chi\tilde{g}_{n+1},
\end{equation}
where similarly to \cite[(2.10)]{CKL21} we define $\tilde{g}_{n+1}$ by
\begin{equation}\label{tildeg}
\tilde{g}_{n+1}(t,x)=\sum_{j=1}^2a_{j,n+1}(t,x)\cos (5\lambda_{n+1}l_j\cdot x), \quad a_{j,n+1}=2\sqrt{\frac{\tilde{\chi}}{5\lambda_{n+1}}}P_{\leq \mu_{n+1}}\sqrt{C_0+\mathcal{R}_j^o\frac{q_{\ell}}{\tilde{\chi}}},
\end{equation}
where $l_1=(\frac35,\frac45)^T$, $l_2=(1,0)^T$.
\begin{remark}\label{r:1}
A few remarks concerning the perturbation $g_{n+1}$:
	\begin{itemize}
	\item $ {g}_{n+1} $ is mean zero due to \eqref{tildeg}. As $L$  as well as the parameters $a,b,\beta$  are deterministic and $q_\ell$ is $(\mathcal{F}_t)$-adapted,   $\tilde{g}_{n+1}$ is $(\mathcal{F}_t)$-adapted.
\item We note that since $\mu_{n+1}$ is much smaller than $\lambda_{n+1}$, the spatial frequencies of $\tilde{g}_{n+1}$ are localized to $\lambda_{n+1}$. Its Fourier coefficients take the form for $k\in \mathbb{Z}^{2}$
\[
\begin{aligned}
	\widehat{\tilde{g}_{n + 1}} (k)
	&=\begin{cases}
		\sum_{j = 1}^2 \sqrt{\frac{\tilde{\chi}}{5 \lambda_{n + 1}}} \left\langle
		\sqrt{C_0 +\mathcal{R}_j^o \frac{q_{\ell}}{\tilde{\chi}}}, P_{\leq\mu_{n+1}}e^{-i (k+5 \lambda_{n + 1}
			l_j ) \cdummy x} \right\rangle , &\text{if } | k+5 \lambda_{n + 1} l_j  | \leqslant \mu_{n + 1}, \\
		\sum_{j = 1}^2\sqrt{\frac{\tilde{\chi}}{5 \lambda_{n + 1}}} \left\langle
		\sqrt{C_0 +\mathcal{R}_j^o \frac{q_{\ell}}{\tilde{\chi}}}, P_{\leq\mu_{n+1}} e^{-i (k - 5 \lambda_{n +
				1} l_j) \cdummy x} \right\rangle , &\text{if } | k - 5 \lambda_{n + 1} l_j | \leqslant \mu_{n + 1},\\
		0, &\text{otherwise}.
	\end{cases}
\end{aligned}
\]
This will be employed frequently throughout the paper.
\end{itemize}
\end{remark}
And finally,  the new iteration $ f_{  n+1}     $ is defined as
$$ f_{  n+1}     :=f_\ell+g_{n+1},$$
which is also mean zero and $(\mathcal{F}_t)$-adapted.

Next, we derive a few estimates that facilitate the verification of the inductive estimates below  in Section~\ref{s:8.4} and Section~\ref{s:estq}. In view of the definition of $\tilde{g}_{n+1}$, we obtain
\begin{align}\label{gn}
\|\tilde{g}_{n+1}(t)\|_{B_{\infty,1}^{1/2}}&= \sum_j2^{j/2}\|\Delta_j\tilde{g}_{n+1}\|_{L^\infty}\lesssim\sum_{j:2^j\thicksim \lambda_{n+1}}2^{j/2}\|\Delta_j\tilde{g}_{n+1}\|_{L^\infty}\lesssim \lambda^{1/2}_{n+1}\|\tilde g_{n+1}\|_{L^{\infty}}\nonumber
\\&\lesssim \lambda_{n+1}^{1/2}\left((C_0+1)\frac{\tilde\chi(t)}{5\lambda_{n+1}}\right)^{1/2}\leq \frac{M_0}3\tilde\chi(t)^{1/2}.
\end{align}
{Here we used the following argument in the third step: By $2^j\sim \lambda_{n+1}$ there exists $c_{1},c_{2}>0$ such that $c_{1}\lambda_{n+1}\leq 2^j\leq c_{2}\lambda_{n+1}$. Hence $\log_{2} c_{1}+\log_{2} \lambda_{n+1}\leq j\leq\log_{2} c_{2}+\log_{2} \lambda_{n+1}$ and $$\sum_{j:2^j\sim \lambda_{n+1}}1\lesssim 1.$$}
For the $C^\vartheta$-estimate we have
\begin{align}
\|\tilde{g}_{n+1}(t)\|_{C^\vartheta}
&= \sup_j2^{j\vartheta}\|\Delta_j\tilde{g}_{n+1}(t)\|_{L^\infty}\lesssim\sup_{j:2^j\thicksim \lambda_{n+1}}2^{j\vartheta}\|\Delta_j  \tilde{g}  _{n+1}(t)\|_{L^\infty}\lesssim \lambda^{\vartheta}_{n+1}\|  \tilde{g}_{n+1}  (t)\|_{L^{\infty}}\nonumber
\no\\
&\lesssim \lambda_{n+1}^{\vartheta}\left((C_0+1)\frac{\tilde\chi(t)}{5\lambda_{n+1}}\right)^{1/2}\leq M_0\lambda_{n+1}^{\vartheta-1/2}\tilde\chi(t)^{1/2}.\label{est:fn1}
\end{align}
Furthermore,  we obtain for $\delta>\beta/2$
\begin{align}\label{gn3}
\|  \tilde{g}  _{n+1}(t)\|_{B_{\infty,1}^{1/2-\delta}}&= \sum_j2^{j(1/2-\delta)}\|\Delta_j  \tilde{g}  _{n+1}(t)\|_{L^\infty}
\\&\lesssim \lambda_{n+1}^{1/2-\delta}\left((C_0+1)\frac{\tilde\chi(t)}{5\lambda_{n+1}}\right)^{1/2}.\nonumber
\end{align}
Moreover, we obtain for $s\leq t\leq s+1$
\begin{align*}
&\|a_{j,n+1}(t)-a_{j,n+1}(s)\|_{L^\infty}
\lesssim \lambda_{n+1}^{-1/2}\sqrt{|\tilde{\chi}(t)-\tilde{\chi}(s)|}+\lambda_{n+1}^{-1/2}\tilde{\chi}(s)^{1/2}\sqrt{\Big\|\frac{\mathcal{R}_j^oq_\ell(t)}{\tilde{\chi}(t)}-\frac{\mathcal{R}_j^oq_\ell(s)}{\tilde{\chi}(s)}\Big\|_{L^\infty}}\nonumber
\\&\lesssim\lambda_{n+1}^{-1/2}\ell^{-1/2}\|q_n\|^{1/2}_{C_{[t-2,t]}X}|t-s|^{1/2}+\lambda_{n+1}^{-1/2}\sqrt{\frac{\|\tilde{\chi}(s)\mathcal{R}_j^oq_\ell(t)-\tilde{\chi}(t)\mathcal{R}_j^oq_\ell(s)\|_{L^\infty}}{\tilde{\chi}(t)}}\nonumber
\\&\lesssim\lambda_{n+1}^{-1/2}\ell^{-1/2}\|q_n\|^{1/2}_{C_{[t-2,t]}X}|t-s|^{1/2},
\end{align*}
which implies that
\begin{align}\label{esta}\|a_{j,n+1}\|_{C^{1/2}_tL^\infty}\lesssim \|q_n\|_{C_{[t-2,t]}X}^{1/2}.\end{align}
Finally, we obtain
\begin{align}\label{esta1}
	\|a'_{j,n+1}(t)\|_{L^\infty}&\lesssim  \lambda_{n+1}^{-1/2}\tilde{\chi}^{-1/2}|\tilde{\chi}'|+\lambda_{n+1}^{-1/2}\tilde{\chi}^{1/2}(\ell^{-1}\tilde\chi^{-1}\|q_n\|_{C_tX}+\|q_\ell\|_X|\tilde{\chi}'|\tilde{\chi}^{-2})\nonumber
	\\&\lesssim \lambda_{n+1}^{-1/2}r_{n}^{-1/2}\ell^{-1}\|q_n\|_{C_tX}.
\end{align}

\subsection{Inductive estimates for $ f_{  n+1}     $}
\label{s:8.4}

From the construction we see that $\supp\widehat{ f_{  n+1}     }\subset \{|k|\leq 6\lambda_{n+1}\}$ and $ f_{  n+1}     (0)=0$. We first prove \eqref{iteration ps} and \eqref{induction fn}. Combining  \eqref{mol1}, \eqref{gn}  we obtain
\begin{align*}&\sup_{s\geq 2^{-n+2}}\left(\mathbf{E}\sup_{t\in[s,s+1]}\| f_{  n+1}     (t)-  f_n  (t)\|^{2r}_{B_{\infty,1}^{1/2}}\right)^{1/2r}
\\&\leq \$f_\ell-  f_n  \$_{B_{\infty,1}^{1/2},2r}+\sup_{s\geq 2^{-n+2}}\left(\mathbf{E}\sup_{t\in[s,s+1]}\|g_{n+1}(t)\|^{2r}_{B_{\infty,1}^{1/2}}\right)^{1/2r}
\\&\leq r^{1/2}_{n+1}+\frac{M_0}3r^{1/2}_n+ \frac{M_0}3\sup_{s\geq 2^{-n+1}}\left(\mathbf{E}\sup_{t\in[s,s+1]} \|q_n(t)\|_X^r\right)^{1/2r}
\\&\leq r^{1/2}_{n+1}+ M_0r_n^{1/2},\end{align*}
which implies  \eqref{iteration ps}. Here in the last second step we used $\ell\leq 2^{-n+1}$ and in the last step we used \eqref{iteration R} for $q_n$.
Similarly using  \eqref{est:fn1} and $r_n=M_L\lambda_0^\beta \lambda_n^{-\beta}$  we have for $\vartheta<\frac12+\frac{\beta}{4b}$
$$\sup_{s\geq 2^{-n+2}}\left(\mathbf{E}\sup_{t\in[s,s+1]}\| f_{  n+1}     (t)-  f_n  (t)\|_{C^{\vartheta}}^{2r}\right)^{1/2r}\leq M_0M_L^{1/2}\lambda_{0}^{\beta/2}\lambda^{-\beta/4b}_{n+1}+r^{1/2}_{n+1},$$
which implies \eqref{induction fn}.
Also combining  \eqref{mol1} and \eqref{gn}, \eqref{iteration R1} for $q_n$ we obtain
$$\left(\mathbf{E}\sup_{t\in[2^{-n-1},2^{-n+2}]}\| f_{  n+1}     (t)-  f_n  (t)\|_{B_{\infty,1}^{1/2}}^{2r}\right)^{1/2r}\leq M_0M_L^{1/2}+r^{1/2}_{n+1}.$$
Using \eqref{mol1}  we obtain
$$\left(\mathbf{E}\sup_{t\in[0,2^{-n-1}]}\| f_{  n+1}     (t)-  f_n  (t)\|_{B_{\infty,1}^{1/2}}^{2r}\right)^{1/2r}\leq r^{1/2}_{n+1}.$$%
 Thus \eqref{iteration ps1} and \eqref{iteration ps2} hold. Moreover, we have for $t\geq0$
\begin{align*}
&\sup_{s\geq 0}\left(\mathbf{E}\sup_{t\in[s,s+1]}\| f_{  n+1}     (t)\|_{B_{\infty,1}^{1/2}}^{2r}\right)^{1/2r}
\\&\leq \sum_{k=0}^n\sup_{s\geq 0}\left(\mathbf{E}\sup_{t\in[s,s+1]}\|f_{  k+1}(t)-f_{  k}(t)\|_{B_{\infty,1}^{1/2}}^{2r}1_{t\in (2^{-k-1},2^{-k+2}]^c}\right)^{1/2r}
\\&\quad+\sup_{s\geq 0}\left(\mathbf{E}\sup_{t\in[s,s+1]}\left(\sum_{k=0}^n\|f_{  k+1}(t)-f_{  k}(t)\|_{B_{\infty,1}^{1/2}}1_{t\in (2^{-k-1},2^{-k+2}]}\right)^{2r}\right)^{1/2r}.
\end{align*}
The first term on the right hand side is  bounded by $\sum_{k=0}^n(M_0r_k^{1/2}+2r^{1/2}_{k+1})$ as a consequence of the division into  $s\geq 2^{-k+2}$ and $s\leq 2^{-k+2}$ and application of \eqref{iteration ps} and \eqref{iteration ps2} for $k=0,\dots,n$. For the second term we use the fact that each $t$ only belongs to three intervals of $(2^{-k-1},2^{-k+2}]$ and we apply \eqref{iteration ps1} to bound it by $3M_0M_L^{1/2}+\sum_{k=0}^nr^{1/2}_{k+1}$. Thus
by \eqref{para}, \eqref{inductionv ps} follows for $ f_{  n+1}     $.

The estimates \eqref{induction w} and \eqref{induction w1} follow from \eqref{gn3} and \eqref{mol1}. Now, we estimate the $C^{1/2}_tB^{1/2}_{\infty,1}$-norm.
 We use \eqref{esta} and \eqref{iteration R}, \eqref{iteration R1} to obtain
\begin{align}\label{fn2}
	\$  \tilde{g}  _{n+1}\$_{C_t^{1/2}B_{\infty,1}^{1/2},2r}&\leq \sum_{j:2^j\sim \lambda_{n+1}}2^{j/2}\$\Delta_j  \tilde{g}  _{n+1}\$_{C_t^{1/2}L^\infty,2r}
	\\&\lesssim \lambda^{1/2}_{n+1}\$q_n\$^{1/2}_{X,r}\leq  (M_0-2)M^{1/2}_L\lambda^{1/2}_{n+1}.\nonumber
\end{align}
The time derivative of $\chi$ is  bounded by $2^{n+1}\leq\lambda_{n+1}$. Hence  \eqref{eq:C1} holds at the level $n+1$ by \eqref{mol2}. Similarly we have
$$\$\chi  \tilde{g}  _{n+1}\$_{C^{1/2}_{t}B_{\infty,1}^{-\delta},2r}\leq M_0M_L^{1/2}\lambda_{n+1}^{-\delta},$$
Thus \eqref{induction wf} at the level  $n+1$ follows from  \eqref{mol3}.

\subsection{Inductive estimate for $q_{n+1}$}\label{s:estq}

In order to obtain a formula for the error  $q_{n+1}$ we subtract \eqref{mol} from \eqref{induction ps} at level $n+1$. This leads to
\begin{align*}
\nabla q_{n+1}&\approx \underbrace{\chi^2\Lambda   \tilde{g}  _{n+1}\nabla^\bot   \tilde{g}  _{n+1}+\nabla q_\ell}_{\nabla q_M}+\underbrace{\Lambda g_{n+1}\nabla^\bot f_{\ell}+\Lambda f_{\ell}\nabla^\bot g_{ n+1}}_{\nabla q_T}\underbrace{-  \Lambda^{\gamma}\mathcal{R}g_{n+1}}_{\nabla q_D}\underbrace{-\partial_t\mathcal{R}g_{n+1}}_{\nabla q_I}
	\\&\quad+R_{\rm{com}}+\underbrace{P_{\leq\lambda_{n+1}}(:\Lambda z\nabla^\bot z:+\Lambda^{\gamma_1} \mathcal{R} z-\Lambda^{\gamma} \mathcal{R}z) - P_{\leq\lambda_{n}}(:\Lambda z\nabla^\bot z:+\Lambda^{\gamma_1}\mathcal{R} z-\Lambda^{\gamma} \mathcal{R} z)*\varphi_\ell }_{\nabla q_N}
\\&\quad+\underbrace{\Lambda g_{n+1}\nabla^{\bot} z_{n+1} +\Lambda f_\ell\nabla^{\bot}(z_{n+1}-z_n)+\Lambda z_{n+1}\nabla^{\bot} g_{n+1} +\Lambda (z_{n+1}-z_n)\nabla^{\bot}f_\ell}_{\nabla q_z}.
\end{align*}
The main new difficulties  compared to our previous work \cite{HZZ22} appear in the part $q_{z}$. Note  that as the left hand side is mean free, the right hand side is also mean free. Then for each term on the right hand side we can subtract its mean part and in the following we do not change the notation for simplicity. For the mean free part we define the inverse of $\nabla$ by $\Delta^{-1}\nabla\cdot$.

 As a consequence of the above  formula for $q_{n+1}$, we deduce that  $\supp\hat{q}_{n+1}\subset\{|k|\leq 12\lambda_{n+1}\}$ as required in Proposition~\ref{p:iteration}. In the remainder of this section, we are concerned with the iterative estimates \eqref{iteration R} and \eqref{iteration R1}. Recall that in $q_{M}$, the perturbation $\chi\tilde g_{n+1}$ plugged in the nonlinearity cancels part of the old error $\chi^{2} q_{\ell}$, but it  only acts for  $t\geq 2^{-n-1}$. Hence, the error $q_{M}$ can only be shown to be small for $t\geq 2^{-n}$. For $t<2^{-n}$ we can only use the trivial estimate and hence $q_{M}$ is not smaller than $q_{n}$ here. Other terms, for instance the one including the time derivative of $\chi$ already see the problem for times $t\in[2^{-n-1},2^{-n}]$, which in turn dictates the formulation of \eqref{iteration R} and \eqref{iteration R1}.

   We first focus on the most difficult part $q_z$ which requires further decomposition in Fourier space and then we proceed with the remaining terms.

\subsubsection{Estimate of $q_z$}\label{sss:qz}
It will be seen below that no splitting of the time interval is needed for this term, it can be directly controlled for all $t\geq 0$.
Since we only need to consider the mean free part of $q_{z}$, it is sufficient to bound
its $C^{-1+}$-norm, meaning, the norm in $C^{-1+\varsigma}$ for some $\varsigma>0$ arbitrarily small. For $\Lambda   \tilde{g}  _{n+1}\nabla^{\bot} z_{n+1}$ which is localized at frequencies proportional to $\lambda_{n+1}$ we have by \eqref{gn} and Proposition \ref{fe z} for $\kappa>0$
\begin{align*}
\$\Lambda   g  _{n+1}\nabla^{\bot} z_{n+1}\$_{{C^{-1+}},r}&\lesssim \lambda_{n+1}^{-1}\$\Lambda   \tilde{g}  _{n+1}\nabla^{\bot} z_{n+1}\$_{C^\kappa,r}\lesssim\lambda_{n+1}^{-1}\$\Lambda   \tilde{g}  _{n+1}\$_{C^\kappa,2r}\$\nabla^{\bot} z_{n+1}\$_{C^\kappa,2r}\\
&\lesssim \lambda_{n+1}^{-1}\lambda_{n+1}^{1/2+\kappa}\lambda_{n+1}^{1-\gamma_1/2 {+\alpha}+2\kappa}M_L
\leq\frac1{35}r_{n+1}.
\end{align*}
Here we used H\"{o}lder's inequality in the second step and we used $\beta<\frac{\gamma_1}2{-\alpha}-\frac12-3\kappa$. The estimate for $\Lambda z_{n+1}\nabla^{\bot} g_{n+1}$ is similar.

The remainder of this Section~\ref{sss:qz} is devoted to the estimate of the difficult term $\Lambda f_\ell\nabla^{\bot}(z_{n+1}-z_n) +\Lambda (z_{n+1}-z_n)\nabla^{\bot}f_\ell$. Here, the smallness must come from the difference $z_{n+1}-z_{n}$ but it is delicate to capture without postulating further restrictions on our parameters. In the first step, we recall that $\ell=\ell_{n+1}$, $f_{\ell}=f_{n}*\varphi_{\ell}=(f_{\ell_{n}}+g_{n})*\varphi_{\ell}$ where $f_{\ell_{n}}=f_{n-1}*\varphi_{\ell_{n}}$. Then we  write
\begin{align*}&\Lambda f_\ell\nabla^{\bot}(z_{n+1}-z_n) +\Lambda (z_{n+1}-z_n)\nabla^{\bot}f_\ell\\
&=\Lambda f_{\ell_{n}}*\varphi_\ell\nabla^{\bot}(z_{n+1}-z_n) +\Lambda (z_{n+1}-z_n)\nabla^{\bot}f_{\ell_{n}}*\varphi_\ell
\\&\quad+\Lambda {g}  _n*\varphi_\ell\nabla^{\bot}(z_{n+1}-z_n) +\Lambda (z_{n+1}-z_n)\nabla^{\bot}{g}  _n*\varphi_\ell=:I_1+I_2,
\end{align*}
where $I_1$ denotes  the second line and $I_2$ denotes  the third line.
Additionally, we introduce  further decomposition
of the difference $z_{n+1}-z_n$. Namely,
\begin{align}\label{decom}z_{n+1}-z_n=\sum_{l=0}^N\sum_k 1_{C_l}(k)\widehat{(z_{n+1}-z_n)}(k)e_k,
\end{align}
where $e_k(x)=\frac1{(2\pi)^2}e^{ik\cdot x}$, $N=\lfloor(b-1)/\zeta\rfloor+1$, $C_0=\{|k|_\infty\leq 10\lambda_n\}$ and for $l=1,\dots N$ we define
$$C_l=\{10\mu^{l-1}_n\lambda_n<|k|_\infty\leq 10\mu^{l}_n\lambda_n\}.$$
These sets split the Fourier support of $z_{n+1}-z_{n}$ into annuli which permits us to use either the  upper or the lower bound on the Fourier support in various estimates so that we are able to balance the required conditions on parameters.
Here and in the sequel we denote $|k|_\infty=\max\{k^1,k^2\}$. By definition we have
\begin{align}\label{estfl}
\|\nabla f_{\ell_n}\|_{L^\infty}+\|\Lambda f_{\ell_n}\|_{L^\infty}\lesssim \lambda_n^{1/2}\|f_{\ell_n}\|_{B^{1/2}_{\infty,1}} ,
\end{align}
and
\begin{align}\label{estz}
\left\|\sum_k 1_{C_l}(k)\widehat{(z_{n+1}-z_n)}(k)e_k\right\|_{C^{1+}}\lesssim (\lambda_n\mu_n^l)^{1+2\kappa+\alpha-\gamma_1/2}\|z_{n+1}-z_n\|_{C^{\gamma_1/2 {-\alpha}-\kappa}}.
\end{align}
Here we used \eqref{fouT} and \cite[Lemma 8.7]{GP17} to deduce $\|\mathcal{F}^{-1}_{\mT^2}1_{C_l}\|_{L^1(\mT^2)}\lesssim \log ^2(\mu_n^l\lambda_n).$

By the decomposition of $z_{n+1}-z_n$ in \eqref{decom} we obtain the decomposition of $I_1=\sum_{l=0}^N I_1^l$. That is, each $I_1^l$ is given by $I_1$ with $z_{n+1}-z_n$ replaced by $\sum_k 1_{C_l}(k)\widehat{(z_{n+1}-z_n)}(k)e_k$.
As  $I^0_1$ is localized outside a ball of radius  $\lambda_n/3$,  we use  \eqref{estfl}, \eqref{estz}, H\"{o}lder's inequality and \eqref{inductionv ps} on the level $n$ and Proposition \ref{fe z} to obtain that the part of $\$q_z\$_{X,r}$ corresponding to  $I^0_1$ is controlled by
\begin{align*}
\$ I_{1}^{0}\$_{C^{-1+},r} &\lesssim \lambda_{n}^{-1+\kappa}\$ I_{1}^{0}\$_{L^{\infty},r}
\\&\lesssim \lambda_n^{-1+\kappa}\lambda_n^{1/2}\$f_{\ell_{n}}\$_{B^{1/2}_{\infty,1},2r}\lambda_{n}^{1+2\kappa {+\alpha}-\gamma_1/2}\$z_{n+1}-z_n\$_{C^{\gamma_1/2 {-\alpha}-\kappa},2r}
	\\&\lesssim \lambda_n^{-1/2+\kappa}\lambda_{n}^{1+2\kappa-\gamma_1/2 {+\alpha}} M_L\leq \frac1{35(N+1)}r_{n+1},
\end{align*}
where we used $1/2+\beta b+3\kappa<\frac{\gamma_1}2-\alpha$.

 Moreover, as each $I^l_1$, $l=1,\dots, N$, is localized outside a ball of radius $\mu_n^{l-1}\lambda_n$,
\eqref{estfl}, \eqref{estz} and H\"{o}lder's inequality  lead to
\begin{align*}
\$I_{1}^{l}\$_{C^{-1+},r}
&\lesssim (\lambda_n\mu_n^{l-1})^{-1+\kappa}\lambda_n^{1/2}\$f_{\ell_{n}}\$_{B^{1/2}_{\infty,1},2r}(\lambda_{n}\mu_n^l)^{1+2\kappa {+\alpha}-\gamma_1/2}\$z_{n+1}-z_n\$_{C^{\gamma_1/2 {-\alpha}-\kappa},2r}
	\\&\lesssim \lambda_n^{-1/2+\kappa}\lambda_{n}^{1+2\kappa-\gamma_1/2{+\alpha}+\zeta} M_L\leq \frac1{35(N+1)}r_{n+1}
\end{align*}
provided \eqref{inductionv ps} for $f_{ n-1}$ and Proposition \ref{fe z}. We also used the condition on the parameters to deduce that  the exponent  of $\mu_n$ is smaller than $1$ and that $1/2+\beta b+3\kappa<\frac{\gamma_1}2-\alpha-\zeta$.

In order to estimate $I_2$, we decompose as follows
\begin{align*}I_2&=5\lambda_{n}[g_n*\varphi_\ell\nabla^{\bot}(z_{n+1}-z_n) + (z_{n+1}-z_n)\nabla^{\bot}g_n*\varphi_\ell]\\
&\quad+(\Lambda g_n-5\lambda_{n}g_n)*\varphi_\ell\nabla^{\bot}(z_{n+1}-z_n) \\
&\quad+(\Lambda-5\lambda_{n})(z_{n+1}-z_n)\nabla^{\bot}g_n*\varphi_\ell\\
&=:I_{21}+I_{22}+I_{23} \approx I_{22}+I_{23},
\end{align*}
where the last step follows from $g_n*\varphi_\ell\nabla^{\bot}(z_{n+1}-z_n) + (z_{n+1}-z_n)\nabla^{\bot}g_n*\varphi_\ell=\nabla^{\bot}((z_{n+1}-z_n)g_n*\varphi_\ell)$.
We employ again the decomposition \eqref{decom} and get $I_{22}=\sum_{l=0}^NI_{22}^l$, where each $I_{22}^l$ is given by $I_{22}$ with $z_{n+1}-z_n$ replaced by $\sum_k 1_{C_l}(k)\widehat{(z_{n+1}-z_n)}(k)e_k$.
By Lemma \ref{lem:21} we obtain
\begin{align*}\Lambda   \tilde{g}  _n-5\lambda_{n}  \tilde{g}  _n=\sum_{j=1}^2l_j\cdot \nabla a_{j,n}\sin(5\lambda_nl_j\cdot x)+T^{(1)}_{5\lambda_nl_j}a_{j,n}\cos(5\lambda_nl_j\cdot x)+T^{(2)}_{5\lambda_nl_j}a_{j,n}\sin(5\lambda_nl_j\cdot x),
\end{align*}
which by Lemma \ref{lem:33} implies that
 \begin{align*}\|\Lambda g_n-5\lambda_{n}g_n\|_{L^\infty}\lesssim ({\lambda_{n-1}} +\lambda_n^{-1}\mu_n^2+\lambda_n^{-2}\mu_n^3)\lambda_n^{-1/2} \tilde{\chi}^{1/2},
 \end{align*}
 where we computed the derivative of $a_{j,n}$ and used the fact that $q_{\ell_n}$ is localized on frequencies of order at most  $\lambda_{n-1}$.
 Thus by \eqref{estz} and H\"{o}lder's inequality we obtain
\begin{align*}
&\$I^{0}_{22}\$_{C^{-1+},r}
\\&\lesssim({\lambda_{n-1}} +\lambda_n^{-1}\mu_n^2+\lambda_n^{-2}\mu_n^3)\lambda_n^{-1/2}(\$q_{n-1}\$_{X,r}^{1/2}+r_{n-1}^{1/2})\lambda_{n}^{1+2\kappa-\gamma_1/2 {+\alpha}}\$z_{n+1}-z_n\$_{C^{\gamma_1/2 {-\alpha}-\kappa},2r}
\\&\lesssim \lambda_n^{1/b-1/2}\lambda_{n}^{1+2\kappa-\gamma_1/2 {+\alpha}}M_L\leq \frac1{35(N+1)}r_{n+1},
\end{align*}
 where we used the condition $1/2+2\kappa+1/b+\beta b<\frac{\gamma_1}2-\alpha.$

 Moreover, as $I^l_{22}$ is localized in ball of radius $\lambda_n\mu_n^{l-1}$ we obtain by H\"{o}lder's inequality and \eqref{estz}
\begin{align*}
&\$I^{l}_{22}\$_{C^{-1+},r}
\\&\lesssim(\lambda_n\mu_n^{l-1})^{-1+\kappa}({\lambda_{n-1}} +\lambda_n^{-1}\mu_n^2+\lambda_n^{-2}\mu_n^3)\lambda_n^{-1/2}(\$q_{n-1}\$_{X,r}^{1/2}+r_{n}^{1/2})\\
&\qquad\qquad\times(\lambda_{n}\mu_n^{l})^{1+2\kappa-\gamma_1/2 {+\alpha}}\$z_{n+1}-z_n\$_{C^{\gamma_1/2 {-\alpha}-\kappa},2r}
	\\&\lesssim \lambda_n^{1/b-1/2+\zeta}\lambda_{n}^{3\kappa-\gamma_1/2 {+\alpha}}M_L\leq \frac1{35(N+1)}r_{n+1}.
\end{align*}
Here we used the conditions on parameters to deduce $\lambda_n^{-1}\mu_n^2+\lambda_n^{-2}\mu_n^3\leq1$ and that the other exponent of  $\mu_n$ is smaller than 1 and that $3\kappa+\zeta+1/b+\beta b<\frac{\gamma_1}2+\frac12-\alpha.$

Finally,  we concentrate on $I_{23}$ which requires a different decomposition. In particular,
we let $N_0=\lfloor b/\zeta\rfloor+1$ and for $l=1,\dots,N_0$ we define
$$A_l=\{k:|k+5\lambda_nl_1|_\infty\leq 5\mu^l_n\},\quad \bar A_l=\{k:|k-5\lambda_nl_1|_\infty\leq 5\mu^l_n\},$$
$$B_l=\{k:|k+5\lambda_nl_2|_\infty\leq 5\mu^l_n\},\quad \bar B_l=\{k:|k-5\lambda_nl_2|_\infty\leq 5\mu^l_n \}.$$
Let $D_l=A_l\cup \bar A_l\cup B_l\cup \bar B_l$.
Then it holds
\begin{align*}I_{23}&=\sum_{k}1_{D_1}(k)(|k|-5\lambda_n)\widehat{(z_{n+1}-z_n)}(k)e_k\nabla^{\bot}g_n*\varphi_\ell
\\&\quad+\sum_{l=1}^{N_0-1}\sum_{k}1_{D_l^c\cap D_{l+1}}(k)(|k|-5\lambda_n)\widehat{(z_{n+1}-z_n)}(k)e_k\nabla^{\bot}g_n*\varphi_\ell
\\&=:I_{23}^0+\sum_{l=1}^{N_0-1}I_{23}^l.
\end{align*}
Then we obtain on $D_1$ $|k|\leq 5\lambda_n+C\mu_n$ which by Proposition \ref{fe z} implies that
\begin{align*}
\$I_{23}^0\$_{C^{-1+},r}
&\lesssim \$\sum_{k}1_{D_1}(k)(|k|-5\lambda_n)\widehat{(z_{n+1}-z_n)}(k)e_k\$_{H^{\kappa/2},2r}\$\nabla^{\bot}  \tilde{g}  _n*\varphi_\ell\$_{C^\kappa,2r}
\\&\lesssim \lambda_n^{\zeta}\$z_{n+1}-z_n\$_{H^{\kappa/2},2r}\lambda_n^{1/2+\kappa}\$  \tilde{g}  _n\$_{C^{1/2},2r}
\\&\lesssim\lambda_n^{\zeta+1/2+\alpha-\gamma_1/2+3\kappa}M_L\leq \frac1{35N_0}r_{n+1}.
\end{align*}
Here we used \eqref{gn} for $  \tilde{g}  _n$ and  $\frac12+3\kappa+\beta b<\frac{\gamma_1}2-\alpha-\zeta.$

On $D_l^c$ we have
$$
|k+5\lambda_nl_1|_\infty>5\mu^l_n,\quad |k-5\lambda_nl_1|_\infty>5\mu^l_n,\quad |k+5\lambda_nl_2|_\infty>5\mu^l_n,\quad |k-5\lambda_nl_2|_\infty>5\mu^l_n
$$
 which by Remark \ref{r:1} implies that  the term $I_{23}^l$ is localized outside a ball of radius $\mu_n^l$. Indeed,  by Remark \ref{r:1} we know that $\mathcal{F}_{\mT^2}  \tilde{g}  _n(m)\neq0$ only if one of the following four cases happens
 $$
 |m+5\lambda_nl_1|\leq \mu_n,\quad |m-5\lambda_nl_1|\leq\mu_n
 ,\quad |m+5\lambda_nl_2|\leq\mu_n,\quad |m-5\lambda_nl_2|\leq\mu_n.
 $$ In any case we obtain $|k+m|>4\mu^l_n$.  Thus,
\begin{align*}
&\|I^l_{23}\|_{C^{-1+}}\lesssim \mu_n^{-l+\kappa}\left\|\sum_{k}1_{A_{l+1}\cup \bar A_{l+1}\cup  B_{l+1}\cup \bar B_{l+1}}(k)(|k|-5\lambda_n)\widehat{(z_{n+1}-z_n)}(k)e_k\right\|_{L^\infty}\|\nabla^{\bot}  \tilde{g}  _n*\varphi_\ell\|_{L^\infty}.
\end{align*}
Here we used \eqref{fouR} and \cite[Lemma 8.7]{GP17} to deduce $\|\mathcal{F}^{-1}_{\mathbb{T}^2}1_{D_l^c}\|_{L^1(\mT^2)}\lesssim (\log \mu_n)^8$.
To estimate the right hand side, we distinguish two cases: $l+1<1/\zeta\leq l+2$ and $l\geq 1/\zeta-1$.

  Case I. If $l+1<1/\zeta\leq l+2$ then  $10\mu_n^{l+1}<\lambda_n$ and we can take $a$ large enough such that $10\mu_n^{l+1}<\frac14\lambda_n$. As the sets $A_{l+1},\bar A_{l+1}, B_{l+1},\bar B_{l+1}$ are disjoint,
 we can consider them separately. In what follows, we only consider the case of $1_{A_{l+1}}$, the others being similar. Let  $\phi\in C^\infty(\mR^2)$ such that $\phi(x)=1$ for $|x|\leq 1$ and $\phi(x)=0$ for $|x|>2$. Then
\begin{align*}&\sum_{k}1_{A_{l+1}}(k)(|k|-5\lambda_n)\widehat{(z_{n+1}-z_n)}(k)e_k\\
&=\sum_{k}1_{A_{l+1}}(k)\phi\bigg(\frac{|k+5\lambda_n l_1|}{10\mu_n^{l+1}}\bigg)(|k|-5\lambda_n)\widehat{(z_{n+1}-z_n)}(k)e_k
\end{align*}
Similarly to the proof of Lemma~\ref{lem:33} (see \cite[Lemma 3.3]{CKL21}) it holds  for $m(k)=\phi(\frac{k+5\lambda_n l_1}{10\mu_n^{l+1}})(|k|-5\lambda_n)$ that
\begin{align}\label{fm}\|\mathcal{F}_{\mR^2}^{-1}m\|_{L^1(\mR^2)}\lesssim \mu_n^{l+1}.
\end{align}
Indeed,
\begin{align*}\mathcal{F}^{-1}_{\mR^2}m(x)&=(2\pi)^{-2}\int \phi\bigg(\frac{k+5\lambda_n l_1}{10\mu_n^{l+1}}\bigg)(|k|-5\lambda_n)e^{ik\cdot x}\dif k
\\&=\lambda_n(2\pi)^{-2}\int \phi\bigg(\frac{k}{10\mu_n^{l+1}}\bigg)\left(\bigg|\frac{k}{\lambda_n}-5 l_1\bigg|-5\right)e^{ik\cdot x}\dif k e^{-5i\lambda_nl_1\cdot x}
\\&=\lambda_n(2\pi)^{-2}\int \phi\bigg(\frac{k}{10\mu_n^{l+1}}\bigg)\phi_1\bigg(\frac{k}{\lambda_n}\bigg)e^{ik\cdot x}\dif k e^{-5i\lambda_nl_1\cdot x}
\end{align*}
Here $\phi_1(k)=|k-5l_1|-5$ is smooth and $|\phi_1(k)|\lesssim |k|$  for $|k|\leq 1/2$. Thus \eqref{fm} follows from the following
\begin{align*}\bigg\|\mathcal{F}^{-1}_{\mR^2}\bigg(\phi\bigg(\frac{\cdot}{10\mu_n^{l+1}}\bigg)\phi_1\bigg(\frac{\cdot}{\lambda_n}\bigg)\bigg)\bigg\|_{L^1}=\bigg\|\mathcal{F}_{\mR^2}^{-1}\bigg(\phi_1\bigg(\frac{10\mu_n^{l+1}\cdot}{\lambda_n}\bigg)\phi\bigg)\bigg\|_{L^1}
\lesssim \lambda_n^{-1}\mu_n^{l+1}.
\end{align*}
Hence we obtain by \eqref{fouR} and \eqref{fouT} that
\begin{align*}
&\left\|\sum_{k}1_{A_{l+1}}(k)(|k|-5\lambda_n)\widehat{(z_{n+1}-z_n)}(k)e_k\right\|_{L^\infty}
\\&\lesssim \mu_n^{l+1}\left\|\sum_{k}1_{A_{l+1}}(k)\widehat{(z_{n+1}-z_n)}(k)e_k\right\|_{L^\infty}
\\&\lesssim\mu_n^{l+1+\kappa}\|z_{n+1}-z_n\|_{L^\infty},
\end{align*}
where we used  \cite[Lemma 8.7]{GP17} to deduce $\|\mathcal{F}^{-1}_{\mathbb{T}^2}1_{A_{l+1}}\|_{L^1(\mT^2)}\lesssim (\log \mu_n)^2$.
Thus by H\"{o}lder's inequality it holds that
\begin{align*}
\$I_{23}^l\$_{C^{-1+},r}
&\lesssim  \mu_n^{-l+2\kappa}\mu_n^{{l+1}}\$z_{n+1}-z_n\$_{L^\infty,2r}\lambda_n^{1/2}\$  \tilde{g}  _n\$_{B^{1/2}_{\infty,1},2r}
\\&\leq \mu_n^{-l+2\kappa}\mu_n^{{l+1}}\$z\$_{C^{\gamma_1/2 {-\alpha}-\kappa},2r}\lambda_{n}^{-\gamma_1/2 {+\alpha}+2\kappa}\lambda_n^{1/2}\$  \tilde{g}  _n\$_{B^{1/2}_{\infty,1},2r}\leq\frac1{35N_0}r_{n+1}.
\end{align*}
Here we used Proposition \ref{fe z} and \eqref{gn} for $  \tilde{g}  _n$ and $\frac12+3\kappa+\beta b<\frac{\gamma_1}2 {-\alpha}-\zeta.$

Case II. For the case that $l\geq1/\zeta-1$  we obtain $\lambda_n\leq \mu_n^{l+1}$ and
\begin{align*}
\$I_{23}^l\$_{{C^{-1+}},r}&\lesssim  \mu_n^{-l+\kappa}\$\sum_{k}1_{D_{l+1}}(|k|-5\lambda_n)\widehat{(z_{n+1}-z_n)}(k)e_k\$_{L^\infty,2r}\|\nabla^{\bot}  \tilde{g}  _n*\varphi_\ell\|_{L^\infty,2r}.
\end{align*}
Then we have
\begin{align*}
&\$\sum_{k}1_{D_{l+1}}(|k|-5\lambda_n)\widehat{(z_{n+1}-z_n)}(k)e_k\$_{L^\infty,2r}
\\&\lesssim  \$\sum_{k}1_{D_{l+1}}\widehat{(z_{n+1}-z_n)}(k)e_k\$_{C^{1+},2r}+\lambda_n\$\sum_{k}1_{D_{l+1}}\widehat{(z_{n+1}-z_n)}(k)e_k\$_{L^\infty,2r}
\\&\lesssim(\mu_{n}^{(l+1)(1-\gamma_1/2+\alpha+2\kappa)}+\lambda_n\lambda_n^{-\gamma_1/2+\alpha+2\kappa})\$z\$_{C^{\gamma_1/2-\alpha-\kappa},2r},
\end{align*}
where we used \eqref{fouT} and \cite[Lemma 8.7]{GP17} to deduce $\|\mathcal{F}_{\mT^2}^{-1}1_{D_{l+1}}\|_{L^1(\mT^2)}\lesssim (\log \mu_n)^8$ and in the last step for the first term we used the upper bound of Fourier support and for the second bound we used the lower bound of Fourier support.
This implies that
\begin{align*}
\$I_{23}^l\$_{{C^{-1+}},r}&\lesssim  \lambda_n^{\zeta-\frac{\gamma_1}2+\alpha+3\kappa+\frac12}M_L
\leq \frac1{35N_0}r_{n+1}.
\end{align*}
Here, we used $\mu_n^{-l-1}\leq \lambda_n^{-1}$ and the condition $\frac12+3\kappa+\beta b<\frac{\gamma_1}2 -\alpha-\zeta$.

To summarize the results of this subsection, we proved that
$$
\$q_{z}\$_{X,r}\leq \frac17 r_{n+1}
$$
and this will be combined with the estimates in the next subsection to deduce \eqref{iteration R} and \eqref{iteration R1}.

\subsubsection{Estimate of the other parts of $q_{n+1}$}

In the sequel, we are mostly inspired by \cite{HZZ22} and \cite{CKL21}, but some differences appear here in the treatment of the additional terms.

Let us start with  $q_{M}$. If $t\geq 2^{-n}$, we proceed similarly to  the proof of Proposition~3.1 in \cite{CKL21} with $r_n$ replaced by $\tilde\chi$  and obtain
$$\|\chi^{2}\Delta^{-1}\nabla\cdot (\Lambda \tilde g_{n+1}\nabla^{\perp}\tilde g_{n+1}+\nabla q_{\ell})\|_X\lesssim \log \mu_{n+1}(\mu_{n+1}^{-1}\lambda_n)^2\tilde{\chi}+\log \mu_{n+1}(\mu_{n+1}\lambda_{n+1}^{-1})^2\tilde{\chi}+\lambda_{n}\lambda_{n+1}^{-1}\tilde{\chi},
$$
which by \eqref{iteration R} and \eqref{iteration R1} on the level $n$ in particular implies that
 $$\left(\sup_{t\geq 2^{-n}}\mathbf{E}\sup_{ s\in[t, t+1]}\|\chi^{2}\Delta^{-1}\nabla\cdot (\Lambda \tilde g_{n+1}\nabla^{\perp}\tilde g_{n+1}+\nabla q_{\ell})\|_{X}^r\right)^{1/r}\leq\frac17r_{n+1}.
$$
In particular,  we used the fact  that $\zeta>1/b+\beta/2$, the implicit constants  depend on $M_0$ and we choose $a$ large enough to absorb them.

If $t\in[0,2^{-n}]$ then we have the additional term $(1-\chi^{2})q_{\ell}$, which can be controlled as
$$
\|(1-\chi^2)q_\ell\|_{X}\leq\sup_{s\in[t-\ell,t]}\|q_n(s)\|_X,
$$
hence by \eqref{iteration R1} on the level $n$ we obtain
$$
\left(\E\sup_{s\in[0,2^{-n}]}\|(1-\chi^2)q_\ell\|_{X}^{r} \right)^{1/r}\leq M_{L}+\sum_{k=1}^{n}r_{k}.
$$

The second term which requires the splitting into $t\geq 2^{-n}$ and $t\in [2^{-n-1},2^{-n}]$ is $q_{I}$, the remaining terms can be controlled directly for $t\geq0$. In particular, one part of $q_{I}$ is bounded directly for $t\geq 0$
 using  \eqref{esta1} as
$$
\begin{aligned}
	\$\chi\Delta^{-1}\nabla \cdot \partial_t\mathcal{R}   \tilde{g}_{n+1}  \$_{X,r}&\lesssim  \frac{\log\lambda_{n+1}} {\lambda_{n+1}} \lambda_{n+1}^{-1/2}r_{n}^{-1/2}\lambda_{n+1}\$q_n\$_{X,r}\leq\frac1{14}r_{n+1}.
\end{aligned}
$$
and the part containing the time derivative of $\chi$ appears for $t\in[2^{-n-1},2^{-n}]$ and is bounded by
$$\$\Delta^{-1}\nabla \cdot \mathcal{R}   \tilde{g}_{n+1}  \partial_t\chi\$_{{X},r}\lesssim 2^{n+1}\lambda_{n+1}^{-3/2}M_L^{1/2}\log\lambda_{n+1}\leq\frac1{14}r_{n+1}.$$

For $q_T$, we proceed
similarly to  \cite{CKL21} and using the definition of $  \tilde{g}  _{n+1}$  we deduce
\begin{align*}
	&\|\Delta^{-1}\nabla\cdot(\chi\Lambda   \tilde{g}  _{n+1}\nabla^\bot f_{{\ell}}+\chi\Lambda f_{{\ell}}\nabla^\bot   \tilde{g}  _{ n+1})\|_X
	\\&\leq\log\lambda_{n+1} \|  \tilde{g}_{n+1}  \|_{L^\infty}(\|\Lambda f_{{\ell}}\|_{L^\infty}+\|\nabla^\bot f_{{\ell}}\|_{L^\infty})\lesssim \log\lambda_{n+1}\|  f_n  \|_{C_tB^{1/2}_{\infty,1}}\lambda_n^{1/2}(\tilde{\chi}\lambda_{n+1}^{-1})^{1/2},
\end{align*}
which by H\"{o}lder's inequality implies that
\begin{align*}
	&\$\Delta^{-1}\nabla\cdot(\chi\Lambda   \tilde{g}  _{n+1}\nabla^\bot f_{{\ell}}+\chi\Lambda f_{{\ell}}\nabla^\bot   \tilde{g}  _{ n+1})\$_{X,r}
	\\&\lesssim \log\lambda_{n+1}\$  f_n  \$_{B^{1/2}_{\infty,1},2r}\lambda_n^{1/2}\lambda_{n+1}^{-1/2} {(\$q_n\$^{1/2}_{X,r}+r_n^{1/2})}\leq\frac1{7}r_{n+1},
\end{align*}
provided \eqref{inductionv ps} for $f_n$ and $1/2>1/({2b})+\beta.$

For $q_D$ we  recall that the support of the Fourier transform of $  \tilde{g}_{n+1}  $ is contained in an annulus. Hence application of \eqref{gn} leads to
$$
\$q_D\$_{X,r}\lesssim \lambda_{n+1}^{\gamma-1}\sqrt{M_L\lambda_{n+1}^{-1}}\leq\frac17r_{n+1},$$
provided $\beta<3/2-\gamma$. 

For $q_N$ we have
\begin{align*}\nabla q_N&=(P_{\leq\lambda_{n+1}}-P_{\leq\lambda_{n}})(:\Lambda z\nabla^\bot z:+\Lambda^{\gamma_1} \mathcal{R} z-\Lambda^{\gamma} \mathcal{R} z)
\\&\quad +P_{\leq\lambda_{n}}(:\Lambda z\nabla^\bot z:+\Lambda^{\gamma_1}\mathcal{R} z-\Lambda^{\gamma} \mathcal{R}z)-P_{\leq\lambda_{n}}(:\Lambda z\nabla^\bot z:+\Lambda^{\gamma_1}  \mathcal{R}z-\Lambda^{\gamma}\mathcal{R}  z)*\varphi_\ell
\\&=:\nabla q_N^1+\nabla q_N^2.
\end{align*}
Then we obtain
\begin{align*}
	\| q^1_N\|_{X} &\lesssim \lambda^{1-\gamma_1 {+2\alpha}+2\kappa}_n\|:\Lambda z\nabla^\bot z:\|_{C^{\gamma_1 {-2\alpha}-2-\kappa}}+\lambda^{\gamma_1/2 {+\alpha}-1+2\kappa}_n\|\Lambda^{\gamma_1}  z\|_{C^{-\gamma_1/2 {-\alpha}-\kappa}}
\\&\quad + \lambda^{\gamma-\gamma_1/2 {+\alpha}-1+2\kappa}_n\|\Lambda^{\gamma}  z\|_{C^{-\gamma+\gamma_1/2 {-\alpha}-\kappa}},
\end{align*}
and
\begin{align*}
	\| q^2_N\|_{X} &\lesssim \ell^{1-1/\gamma_1 {-2\alpha/\gamma_1}-\kappa} \|:\Lambda z\nabla^\bot z:\|_{C_t^{1-1/\gamma_1 {-2\alpha/\gamma_1}-\kappa}C^{-1+\kappa}}
\\&\quad+\ell^{1/\gamma_1 {-\alpha/\gamma_1}-1/2-\kappa}\|\Lambda^{\gamma_1}  z\|_{C^{1/\gamma_1 {-\alpha/\gamma_1}-1/2-\kappa}_tC^{-1+\kappa}}
\\&\quad+\ell^{\frac12-\gamma/\gamma_1 {-\alpha/\gamma_1}+1/\gamma_1-\kappa}\|\Lambda^{\gamma}  z\|_{C^{\frac12-\gamma/\gamma_1 {-\alpha/\gamma_1}+1/\gamma_1-\kappa}_tC^{-1+\kappa}}, \end{align*}
which by Proposition \ref{fe z} and Proposition \ref{lem:re} implies that
$$\$q_N\$_{X,r}\leq \frac17r_{n+1},$$
provided  $ (1-\frac{\gamma_1}{2} {-\alpha})\wedge (\gamma_1-1-2\alpha)\wedge (1+\frac{\gamma_1}{2}-\gamma-\alpha)>b \beta+2\kappa$ and
$(1-\frac1{\gamma_1}-\frac{2\alpha}{\gamma_1})\wedge (\frac{1}{\gamma_1} -\frac{\alpha}{\gamma_1}-\frac12)\wedge (\frac12-\frac{\gamma+\alpha-1}{\gamma_1})>\beta+\kappa.$

Moreover,  we get
\begin{align*}
\|R_{\rm{com}}\|_{X}&\lesssim \log \lambda_{n}\ell^{1/2}_{n+1}\|\Lambda   f_n  \nabla^\bot   f_n  \|_{C_t^{1/2}L^\infty}+\log \lambda_{n}\ell^{1/2}_{n+1}(\|  f_n  \|_{C_t{B^1_{\infty,1}}}+\|z_n\|_{C_t{B^1_{\infty,1}}})\|  f_n  \|_{C_t^{1/2}{B^1_{\infty,1}}}
\\&\quad+\log \lambda_{n} \ell_{n+1}^{1/2-\kappa} \|z_n\|_{C^{1/2-\kappa}_t{B^1_{\infty,1}}}\|  f_n  \|_{C_t{B^1_{\infty,1}}}
\end{align*}
which by Proposition \ref{fe z} and \eqref{inductionv ps}, \eqref{eq:C1} for $f_n$ and H\"{o}lder's inequality implies
\begin{align*}
\$R_{\rm{com}}\$_{X,r}\lesssim {\log \lambda_{n} \ell^{1/2}_{n+1}\lambda^{3/2}_nM_L+\log \lambda_{n} \ell_{n+1}^{1/2-\kappa}{\lambda_n^{3/2}M_L}\leq\frac17 r_{n+1},}
\end{align*}
where we used that  $\beta+\frac{3}{2b}+\kappa<\frac12$.

Altogether,  the above estimates give the control of $q_{n+1}$ for $t\in [2^{-n},\infty)$ and  \eqref{iteration R} follows.
In addition,
  for small times we conclude that
$$\left(\mathbf{E}\sup_{s\in[0,2^{-n}]}\|q_{n+1}(s)\|^r_X\right)^{1/r}\leq r_{n+1}+M_{L}+\sum_{k=1}^{n}r_{k}.$$
Hence \eqref{iteration R1} holds and the proof of Proposition~\ref{p:iteration} is therefore complete.

\section{Initial value problem}
\label{s:1.1}

 \bt\label{thm:6.1}
Let $\delta>0$ and $\kappa>0$ be arbitrary. For any  $\mathcal{F}_0$-measurable initial condition $\theta_0$ with zero mean satisfying \eqref{eq:u0} for some $r\geq1$ and $\eta>1/2$,
there exist infinitely many,  $(\mathcal{F}_t)$-adapted, non-Gaussian, analytically weak solutions $\theta$ to \eqref{eq1} in the sense of Definition~\ref{d:1}, with the initial condition $\theta_0$ and  belonging to
 $$L^p_{\rm{loc}}([0,\infty);B_{\infty,1}^{-1/2})\cap C([0,\infty),B_{\infty,1}^{-1/2-\delta})\cap C^{1/2-\kappa}([0,\infty),B^{-1-\delta}_{\infty,1})\quad \mathbf{P}\text{-a.s. for all } p\in[1,\infty) .$$
Furthermore, there exists $\vartheta>1/2$ so that
\begin{equation}\label{est:fn}
\sup_{s\geq4}\left[\mathbf{E}\sup_{t\in[s,s+1]}\|\theta\|_{B^{\vartheta-1}_{\infty,1}}^{2r}\right]^{1/2r}+\$\theta\$_{C_t^{1/2-\kappa}B^{-1-\delta}_{\infty,1},2r}\lesssim 1
\end{equation}
and  for every $\varepsilon>0$ the solutions can be constructed so that
\begin{equation}\label{eq:32}
\$\theta-\Lambda z\$_{B^{-1/2-\delta}_{\infty,1},2r}\leq \varepsilon.
\end{equation}

\et

\begin{proof}
We repeatedly apply Proposition~\ref{p:iteration} and obtain a sequence of  $(\mathcal{F}_t)$-adapted processes $(  f_n  ,q_{n})$, $n\in\N_{0}$, such that  $  f_n   \to f$ in
$L^{2r}(\Omega,C([0,\infty);B^{1/2-\delta}_{\infty,1}))$  as a consequence of  \eqref{induction w}. Moreover, using {\eqref{iteration ps}, \eqref{iteration ps1}, \eqref{iteration ps2}} we
have for every $p\in[1,\infty),T>1$
\begin{align*}
  & \left[\mathbf{E}\left(\int_0^{T}  \| f_{  n + 1} -   f_n   \|_{B^{1/2}_{\infty,1}}^p \dif t\right)^{2r/p}\right]^{1/2r}
\\ &\lesssim\left[\mathbf{E}\left(\int_{2^{-n+2}}^{T} \| f_{  n + 1} -   f_n  \|_{B^{1/2}_{\infty,1}}^p \dif t \right)^{2r/p}\right]^{1/2r}+
   \left[\mathbf{E}\left(\int_{2^{-n-1}}^{2^{-n+2}} \| f_{  n + 1} -   f_n   \|_{B^{1/2}_{\infty,1}}^p \dif t\right)^{2r/p}\right]^{1/2r}
   \\&\quad+\left[\mathbf{E}\left(\int_0^{2^{-n-1}} \| f_{  n + 1} -   f_n   \|_{B^{1/2}_{\infty,1}}^p \dif t\right)^{2r/p}\right]^{1/2r}
\\&\lesssim 2^{- n/p} M_L^{1/2}  + r_{n+1}^{1/2}+r_n^{1/2}.
\end{align*}
This implies that  the sequence $  f_n  $, $n\in\N_{0}$, converges in $L^{2r}(\Omega,L^{p}(0,T;B_{\infty,1}^{1/2}))$ for all $p\in[1,\infty)$.
Accordingly, $  f_n  \to f$ also in $L^{2r}(\Omega,L^p_{\rm{loc}}([0,\infty);B_{\infty,1}^{1/2}))$.
Furthermore, by \eqref{iteration R}, \eqref{iteration R1} it follows  for all $p\in[1,\infty)$, $T>1$ that
	$$
	\left[\mathbf{E}\left(\int_0^{T}\|q_n(t)\|^{p}_{X}\dif t\right)^{r/p}\right]^{1/r}\lesssim r_{n}+M_L2^{-n/p} \to 0, \quad\mbox{as}\quad n\to\infty.
	$$
	
Thus, by \eqref{besov} and similarly as in  \cite{CKL21} the process $\theta=\Lambda f +\Lambda z$ satisfies \eqref{eq1}  in the analytically weak sense.
More precisely, we define $\theta_n=\Lambda   f_n  +\Lambda z_n$ and from \eqref{induction ps} we obtain for any $\psi\in C^\infty$
\begin{align*}
\langle \theta_n(t)-P_{\leq {\lambda_n}}\theta_0,\psi\rangle=& \int_0^t\langle (\theta_n-\Lambda z_n)
   \mathcal{R}^\bot(\theta_n-\Lambda z_n)+(\theta_n-\Lambda z_n)
   \mathcal{R}^\bot \Lambda z_n+\Lambda z_n
   \mathcal{R}^\bot(\theta_n-\Lambda z_n)\\&+P_{\leq \lambda_n} :(\Lambda z\nabla^\bot z):
   +\nabla q_n, \nabla\psi\rangle
  -\langle
    \Lambda^{\gamma } \theta_{n}
,\psi\rangle\dif s+ \int_0^t\langle
   P_{\leq \lambda_n}\Lambda^\alpha \psi,\dif B_s\rangle.
\end{align*}
Since $\< \mathcal{R}_j f,g\>=-\<f,\mathcal{R}_jg\>$ the above  rewrites  as
\begin{equation}\label{thetan}
\begin{aligned}
	\langle \theta_n(t)-P_{\leq {\lambda_n}}\theta_0,\psi\rangle=& \int_0^t-\frac12\langle \theta_n-\Lambda z_n,[\mathcal{R}^\perp,\nabla \psi] (\theta_n-\Lambda z_n)\rangle-\frac12\langle \Lambda z_n,[\mathcal{R}^\perp,\nabla \psi] (\theta_n-\Lambda z_n)\rangle
	\\&-\frac12\langle (\theta_n-\Lambda z_n),[\mathcal{R}^\perp,\nabla \psi]\Lambda z_n \rangle-\frac12\langle \Lambda z,[\mathcal{R}^\perp,\nabla P_{\leq \lambda_n}\psi]\Lambda z \rangle
	\\&-\langle \nabla q_n, \nabla\psi\rangle
-\langle
    \Lambda^{\gamma } \theta_{n}
,\psi\rangle\dif s+ \int_0^t\langle
   P_{\leq \lambda_n}\Lambda^\alpha \psi,\dif B_s\rangle.
   \end{aligned}
\end{equation}
Here, we used the fact that $$-\frac12\langle \Lambda z,[\mathcal{R}^\perp,\nabla P_{\leq \lambda_n}\psi]\Lambda z \rangle=\langle P_{\leq \lambda_n} :(\Lambda z\nabla^\bot z):
  , \nabla\psi\rangle$$ which holds true by approximation and in particular   no renormalization constant is needed.

Let now $n\rightarrow\infty$. Then $$\int_0^\cdot\langle
   P_{\leq \lambda_n}\Lambda^\alpha \psi,\dif B_s\rangle\to  \int_0^\cdot\langle
   \Lambda^\alpha \psi,\dif B_s\rangle\quad\mbox{in}\quad L^2(\Omega, C[0,\infty)).$$
 As $\theta_n\rightarrow \theta$  in $L^{2r}(\Omega,L^{p}_{\text{loc}}([0,\infty);B_{\infty,1}^{-1/2})\cap C([0,\infty);B^{-1/2-\delta}_{\infty,1}))$ and $q_n\to0$ in $L^r(\Omega,L^p_{\text{loc}}([0,\infty);L^\infty))$ for all $p\in[1,\infty)$ and $z_n\to z$ in $L^{2r}(\Omega, C([0,\infty);C^{\gamma_1/2-\alpha-\kappa})$, we apply Proposition~\ref{p:51} and \eqref{besov} we deduce that terms in \eqref{thetan} converge to their natural limits as in Definition~\ref{d:1} and hence $\theta$ satisfies \eqref{eq2} in the sense of Definition~\ref{d:1}. By \eqref{induction wf} and lower-semicontinuity we obtain
\begin{align}
\label{c1}
\$\theta-\Lambda z\$_{C^{1/2}_{t}B^{-1-\delta}_{\infty,1},2r}\leq M_L^{1/2}+3M_0M_L^{1/2}.
\end{align}
Also \eqref{induction w1} implies \eqref{eq:32}.

Next, we show that
\eqref{est:fn} follows from \eqref{induction fn}. By \eqref{induction fn} and \eqref{para} we know for some $\vartheta>1/2$
\begin{equation}
\label{eq:ee}
\sup_{s\geq4}\left(\mathbf{E}\sup_{t\in[s,s+1]}\|\theta-\Lambda z\|^{2r}_{B^{\vartheta-1}_{\infty,1}}\right)^{1/2r}\leq 3M_0M_L^{1/2}a^{\beta/4}+3M_L^{1/2}.
\end{equation}
Here, $\vartheta$ might be different from $\vartheta$ in \eqref{induction fn} and by \eqref{induction fn} we could always find such $\vartheta$.

Non-uniqueness of solutions as well as the existence of infinitely many solutions and even a  continuum of solutions follow from exactly the same argument as \cite[Sections 3.2.2-3.3.4]{HZZ22} except that we have to take expectation during the computation and apply \eqref{esta}.

If $\theta_0$ is non-Gaussian, we could choose $a$ large enough to guarantee that $P_{\leq\lambda_0}\theta_0$  is non-Gaussian. Accordingly, also the projection of the linear solution  $P_{\leq\lambda_0}z$ is non-Gaussian as $\theta_0$ is independent of the driving noise. This finally implies that also $\theta$ is non-Gaussian since $P_{\leq\lambda_0}f=0$.

If $\theta_0$ is Gaussian, then we can redefine our iteration starting from $  f_0  =\chi Z$, where $Z:\Omega\times \mT^2\to \mR$ is a smooth bounded non-Gaussian random variable with only finitely many non-zero Fourier modes, measurable with respect to $\cF_0$ and independent of the driving noise and from $\theta_0$, where we may need to augment $\cF_0$ and also the filtration $(\cF_t)$; and  $\chi:[0,\infty)\to \mR$ is a smooth cut-off function which equals to $0$ for $t\leq 1/2$ and equals to $1$ for $t>1$.  Then we could choose $\lambda_0$ large enough such that $P_{\leq \lambda_0}Z=Z.$
Thus \eqref{induction ps} on the level $n=0$ reads as
\[-\mathcal{R}Z\partial_t\chi +\chi^2\Lambda Z \nabla^\bot Z +\chi\Lambda z_0 \nabla^\bot Z+\chi\Lambda Z \nabla^\bot z_0+P_{\leq \lambda_0}(:\Lambda z \nabla^\bot z:+\Lambda^{\gamma_1}\mathcal{R}z-\Lambda^{\gamma}\mathcal{R}z)- \chi\Lambda^{\gamma}\mathcal{R} Z=: \nabla q_0  \]
Hence, for some $r\geq1$
\begin{align*}
	\$q_0\$_{X,r}\leq M_L.
\end{align*}
Then it follows from the proof of Proposition~\ref{p:iteration} that $\sum_{n\in\mathbb{N}}\ell_n\leq 1$ and consequently for  $t\geq2$ $\hat{f}_{n}(k)=\hat{Z}(k)$ for $|k|\leq \lambda_0$  which implies that $\hat{\theta}(k)=\widehat{(\Lambda z+\Lambda Z)}(k)$ for $|k|\leq \lambda_0$. Thus $\theta$ is non-Gaussian by independence and the result follows.
\end{proof}

\section{Stationary and ergodic stationary solutions}
\label{s:in}

In this section, we study the long time behavior of solutions to \eqref{eq1} constructed in the previous sections. In particular, we show that they generate stationary as well as ergodic stationary solutions. Moreover, non-uniqueness of both types of stationary solutions  holds.

As it is common in the context of equations without uniqueness, we understand stationarity in terms of shift operators on trajectories instead of the more classical framework of Markov semigroups.
We define the trajectory space for the solution and the noise as $\cT:=C(\mR;B^{-1/2}_{p,1})\times C(\mR;B^{-1-\kappa}_{p,p})$ for some $p\in[2,\infty)$ and $\kappa>0$ and the shifts $S_t$, $t\in\mR$ on this space are given by
$$
S_t(\theta,B)(\cdot)=(\theta(\cdot+t),B(\cdot+t)-B(t))\quad t\in \mR,\quad (\theta,B)\in\cT.
$$
Formally $\partial_{t } B=\xi$ and $B$ represents the noise part in \eqref{eq1}. The set of probability measures on $\mathcal{T}$ is denoted by $\mathcal{P}(\mathcal{T})$. We have all in hand to formulate the notion of stationary solution.

\begin{definition}
We say that  $((\Omega,\mathcal{F},(\mathcal{F}_t),\mathbf{P}),\theta,B)$ is a stationary solution to \eqref{eq2} provided $B$ is a $(\mathcal{F}_t)$-cylindrical Wiener process on $L^2$ with zero spatial mean and $
(\theta,B)$ satisfies  \eqref{eq2} in the analytically weak sense on $(-\infty,\infty)$, more precisely
\begin{align*}
&\langle \theta(t),\psi\rangle+\int_s^t\frac12\langle \Lambda^{-1/2}\theta,\Lambda^{1/2}[\mathcal{R}^\perp\cdot,\nabla \psi] \theta\rangle\dif r
		\\&\qquad= \langle \theta(s),\psi\rangle - \int_{s}^{t}\langle\ \nu\Lambda^{\gamma } \theta
		,\psi\rangle\dif r+\int_{s}^{t}\langle \Lambda^\alpha\psi,\dif B_r
		\rangle,
	\quad\forall \psi \in C^{\infty}(\mathbb{T}^{2}),\  t\geq s,
\end{align*}
and its law is shift invariant, that is,
$$\mathcal{L}[S_{t}(\theta,B)]=\mathcal{L}[\theta,B]\qquad\text{ for all }\quad t\in\mR.$$
\end{definition}

Note that stationary solutions satisfy the equation on $\mathbb{R}$. Thus, in the first step, we shall extend the convex integration solutions from Theorem~\ref{thm:6.1} from $\mathbb{R}^{+}$ to $\mathbb{R}$. We simply define $\theta(t)=\theta(4)$ for $t\leq 4$ and we choose two sided Brownian motion for $B$ which is already defined on $\mathbb{R}$. For the sequel it is irrelevant that the equation is not satisfied for time $t\leq 4$ or that we neglected the initial value.
We made this choice in order to be able to apply \eqref{est:fn} uniformly on $\mR$.

A Krylov--Bogoliubov argument then shows that the approach of Section~\ref{s:1.1} leads to  convex integration solutions  which generate   stationary solutions as limits of their ergodic averages. However, it is convenient to replace the initial value problem solution $z$ by the unique stationary solution to the linear equation \eqref{eqz}, namely, to let
\begin{equation}\label{eq:zstat}
z(t)=\int_{-\infty}^te^{-(t-s)\Lambda^{\gamma_1}}\Lambda^{\alpha-1}\dif B_s.
\end{equation}
It satisfies the estimates of Proposition~\ref{fe z} and can be used to construct the renormalized product $:\Lambda z\nabla^{\perp}z:$ as in Proposition~\ref{lem:re}. These are the stochastic elements that we now employ in the iterative construction of Section~\ref{s:a}.

\bt\label{th:s1}
Let $\theta$ be a convex integration solution  starting from $\theta_{0}=0$ with $z$ given by \eqref{eq:zstat}, obtained through the iteration in Theorem~\ref{thm:6.1}
and extended to $\mR$ by the above construction. Then there exists a sequence $T_{n}\to\infty$ and
 a stationary  solution $((\tilde\Omega,\tilde{\mathcal{F}},(\tilde{\mathcal{F}}_t),\tilde{\mathbf{P}}),\tilde\theta,\tilde B)$ to \eqref{eq1} such that for some $\tau\geq0$
 $$
 \frac{1}{T_{n}}\int_{0}^{T_{n}}\mathcal{L}[S_{t+\tau}(\theta,B)] \dif t\to \mathcal{L}[\tilde\theta,\tilde B] \quad\mbox{weakly in}\quad \mathcal{P}(\mathcal{T})\quad\mbox{as}\quad n\to\infty.
 $$
Moreover, it holds true
some $\delta\in (0,1),\kappa>0$ small enough
 \begin{equation}\label{eq:s}
 \$\tilde\theta\$_{B^{-1/2+\delta}_{\infty,1},2r}+\$\tilde\theta\$_{C^{{1/2-\kappa}}_tB^{-1-\delta}_{\infty,1},2r}\lesssim 1.
 \end{equation}
\et

\begin{proof}
It follows from  \eqref{est:fn} and the construction that there
	exists $\delta\in(0,1)$, $\kappa>0$  so that for every $N\in\mN$
\begin{align}\label{eq:B1}
&\sup_{s\in\mR}\mathbf{E} \left[\| \theta(\cdot+s) \|_{C([-N,N];B_{\infty, 1}^{- 1 / 2 +
				\delta})}+\| \theta(\cdot+s) \|_{C^{1/2-\kappa}([-N,N]; B^{- 1- \delta}_{\infty, 1})}\right]
\\\lesssim &\sup_{s\in\mR}\sum_{i=-N}^N\mathbf{E} \left[\| \theta(\cdot+s) \|_{C([i,i+1];B_{\infty, 1}^{- 1 / 2 +
				\delta})}+\| \theta(\cdot+s) \|_{C^{1/2-\kappa}([i,i+1]; B^{- 1- \delta}_{\infty, 1})}\right]\nonumber
\\\lesssim &N.\nonumber
				\end{align}
		Next, for $T\geq0$ and $\tau\geq0$ we define the following ergodic average as the probability measure on $\cT$
	\begin{align*}
		\nu_{\tau,T}:=\frac1T\int_0^T\cL[S_{t+\tau}(\theta,\zeta)]\dif t,
	\end{align*}
	and we show that the family $\nu_{\tau,T}$, $T\geq 0$, is tight. To this end,
we define for $R>0$ and $\kappa>0$
	$$B_R:=\cap_{N=M}^\infty\Big\{g;\,\|g\|_{C^{1/2-\kappa}([-N,N]; B^{- 1 - \delta}_{\infty, 1})}+\|g\|_{C([-N,N];
		B_{\infty, 1}^{- 1 / 2 + \delta})}\leq R_N\Big\},$$
	which is relatively compact in $C(\mR;B_{p,1}^{-1/2})$.
	Since $S_tB$ is a Wiener process for every $t\in \mR$, the law of $S_tB$ does not change with $t\in \mR$ and is tight. Then we use \eqref{eq:B1} to deduce that  there exists a compact set $K_\eps$ in $\mathcal{T}$ such that
	$$\sup_{t\in\mR}\mathbf{P}(S_t(\theta,B)\in K_\eps^c)<\eps.$$
	This implies
	\begin{align*}
		\nu_{\tau,T} (K_\eps^c) &=  \frac{1}{T} \int_0^T \mathbf{P}(S_{t+\tau} (\theta,B)\in K_\eps^c)  \dif t <\eps.
	\end{align*}
	Now we could use excatly the same argument as \cite[Theorem 4.2]{HZZ22} to obtain the result.
\end{proof}

\begin{corollary}\label{cor:4.3}
There are infinitely many non-Gaussian stationary solutions.
\end{corollary}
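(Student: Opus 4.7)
The plan is to combine the non-Gaussian construction in Theorem~\ref{thm:6.1} with the stationarization Theorem~\ref{th:s1}. Starting from $\theta_{0}=0$, I invoke the modified iteration from the last paragraph of the proof of Theorem~\ref{thm:6.1}: pick a bounded $\mathcal{F}_0$-measurable, non-Gaussian random field $Z$ on $\mathbb{T}^{2}$ with finitely many non-zero Fourier modes, independent of the driving noise, and start the convex integration from $f_{0}=\chi Z$ with $\lambda_0$ so large that $P_{\leq\lambda_0}Z=Z$. As in the end of the proof of Theorem~\ref{thm:6.1}, one has $\widehat{\theta}(t,k)=\widehat{\Lambda z}(t,k)+\widehat{\Lambda Z}(k)$ for $|k|\leq\lambda_{0}$ and $t\geq 2$. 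Hence on the low-frequency projection, $\theta$ splits at every sufficiently large time as a stationary Gaussian part (from $z$, which we now take as the stationary solution \eqref{eq:zstat}) plus an independent non-Gaussian field $\Lambda Z$.

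Next I extend $\theta$ to $\mathbb{R}$ by setting $\theta(t)=\theta(4)$ for $t\leq 4$ as in Section~\ref{s:in}, apply Theorem~\ref{th:s1}, and obtain a stationary solution $\tilde\theta$ as a weak limit of the ergodic averages $\frac{1}{T_{n}}\int_{0}^{T_{n}}\mathcal{L}[S_{t+\tau}(\theta,B)]\,\mathrm{d}t$. To detect non-Gaussianity I fix a smooth mean-zero test function $\phi$ with Fourier support in $\{|k|\leq\lambda_{0}\}$ and look at the fourth cumulant $\kappa_{4}(\langle\tilde\theta(0),\phi\rangle)$. Because $z$ is Gaussian and independent of $Z$, additivity of cumulants for independent sums gives, for every $s\geq 2$,
\[
\kappa_{4}\bigl(\langle P_{\leq\lambda_{0}}(\Lambda z(s)+\Lambda Z),\phi\rangle\bigr)=\kappa_{4}(\langle\Lambda Z,\phi\rangle),
\]
which is nonzero for a suitable choice of $Z$ and $\phi$. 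The uniform moment bound \eqref{eq:s} implies uniform integrability of $\langle\theta(s),\phi\rangle^{4}$ in $s$, so the fourth cumulant passes to the limit along the ergodic average; by shift invariance of $\mathcal{L}[\tilde\theta]$ the resulting quantity equals $\kappa_{4}(\langle\Lambda Z,\phi\rangle)\neq 0$, and $\tilde\theta$ is therefore non-Gaussian.

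To produce infinitely many distinct non-Gaussian stationary solutions, I repeat the construction with a countable family $\{Z_{n}\}_{n\in\mathbb{N}}$ chosen so that $\kappa_{4}(\langle\Lambda Z_{n},\phi\rangle)$ take pairwise distinct values (for example, rescaling a fixed symmetric non-Gaussian $Z$ by a strictly increasing sequence of positive constants). By the previous paragraph the corresponding stationary solutions $\tilde\theta_{n}$ inherit pairwise distinct values of $\kappa_{4}(\langle\tilde\theta_{n}(0),\phi\rangle)$, so the laws $\mathcal{L}[\tilde\theta_{n}]$ are pairwise different.

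The delicate step is the passage to the limit in the fourth moment along the ergodic averages: weak convergence in $\mathcal{P}(\mathcal{T})$, which is all that Theorem~\ref{th:s1} provides directly, does not a priori transfer fourth moments, so two different convex integration solutions could in principle yield the same limit. The key tool for overcoming this is the uniform bound \eqref{eq:s}, which together with the compact Fourier support of $\phi$ and the resulting strong compactness on $P_{\leq\lambda_{0}}$ in $B_{\infty,1}^{-1/2+\delta}$, yields the uniform integrability needed to take the limit in $\mathbf{E}[\langle\theta(s),\phi\rangle^{4}]$ and hence in the cumulant. This is essentially the strategy of \cite[Theorem 4.6]{HZZ22}, which adapts to the present setting without substantial changes once \eqref{eq:s} is in place.
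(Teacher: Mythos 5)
Your proposal is correct and follows essentially the same route as the paper: seed the convex integration with a bounded, $\mathcal{F}_0$-measurable, non-Gaussian $Z$ supported on frequencies $\leq\lambda_0$ and independent of the noise, so that $\hat\theta(t,k)=\widehat{(\Lambda z+\Lambda Z)}(k)$ for $|k|\leq\lambda_0$ and all large $t$, and then push this through the Krylov--Bogoliubov limit of Theorem~\ref{th:s1}. The only deviation is your limit passage via fourth cumulants and uniform integrability (which tacitly requires running the scheme with $2r>4$, available here since $\theta_0=0$ satisfies \eqref{eq:u0} for every $r$); the paper instead notes that the law of $\hat\theta_t(k)$ for $|k|\leq\lambda_0$ is constant in $t$ by stationarity of $z$, so the ergodic averages carry exactly this marginal and weak convergence alone identifies it in the limit, with no moment argument needed.
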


\begin{proof}
For the non-uniqueness of stationary solutions, we start  the iteration in Proposition \ref{p:iteration} with
$   f_0   = Z.$ Here,  $Z$ is a smooth deterministic function with only finitely many non-zero Fourier modes such that we could choose $\lambda_0$ large enough to have $P_{\leq \lambda_0}Z=Z.$
Thus \eqref{induction ps} on the level $n=0$ reads as
\[ \Lambda Z \nabla^\bot Z +\Lambda z_0 \nabla^\bot Z+\Lambda Z \nabla^\bot z_0+P_{\leq \lambda_0}(:\Lambda z \nabla^\bot z:+\Lambda^{\gamma_1}\mathcal{R}z-\Lambda^{\gamma}\mathcal{R}z)- \Lambda^{\gamma}\mathcal{R} Z=: \nabla q_0  \]
Hence, for some $r\geq1$
\begin{align*}
	\$q_0\$_{X,r}\leq M_L.
\end{align*}
Now, we run the convex integration based on Proposition \ref{p:iteration} and we get a limit $f = \lim_{n\to\infty}   f_n  $.
Thus, 	as in Proposition \ref{p:iteration} we could choose $a$ large enough so that  \eqref{induction w1} holds which implies that the solution $\theta$ obtained in Theorem \ref{thm:6.1} extended to $\mR$ as explained above satisfies
$$\sup_{s\geq4}\mathbf{E}\sup_{t\in[s,s+1]}\|\theta-\Lambda z-\Lambda Z\|_{B^{-1/2-\delta}_{\infty,1}}\leq \eps.$$
Then  for any $t>0$ and $\tau\geq4$,  $\mathbf{E}^{\nu_{\tau,T-\tau}}\|\theta(t)-\Lambda z(t)-\Lambda Z\|_{B^{-1/2-\delta}_{\infty,1}}\leq \eps$.
As in the proof of Theorem~\ref{th:s1} we have tightness of $(\theta,B)$ and we modify the stochastic basis to construct a new stochastic process $(\tilde \theta,\tilde B)$ as a stationary solution to \eqref{eq2}. Taking the limit we obtain by lower-semicontinuity $$\tilde{\mathbf{E}}\|\tilde\theta(t)-\Lambda\tilde z(t)-\Lambda Z\|_{B^{-1/2-\delta}_{\infty,1}}\leq \eps.$$
Here $\tilde z(t)=\int_{-\infty}^t e^{-(t-s)\Lambda^{\gamma_1}}\Lambda^{\alpha-1} \dif \tilde B_s$. The convergence of the $z$ part follows from
$$\tilde z(t)=\Lambda^{\alpha-1} \tilde B_t-\int_{-\infty}^t e^{-(t-s)\Lambda^{\gamma_1}}\Lambda^{\gamma_1+\alpha-1}  \tilde B_s\dif s$$ and convergence of $\tilde B$ part.

In order to prove non-Gaussianity, we choose $Z$ to be  a bounded random variable, measurable with respect to $\cF_0$, independent of the driven noise and such that  $P_{\leq\lambda_0}Z$ is non-Gaussian, where we may need to augment $\cF_0$ and also the filtration $(\cF_t)$. The same argument as Theorem~\ref{thm:6.1} implies that for $|k|\leq\lambda_0$ $\hat{\theta}(k)=\widehat{(\Lambda{z}+\Lambda{Z})}(k)$  is non-Gaussian. As this holds for any $t>0$ and $z$ is stationary and $Z$ independent of $t$ it follows that the law of $\hat{\theta}_t(k)$ does not change over time which implies that $\tilde\theta$ is non-Gaussian. 
Non-uniqueness is achieved by choosing different $Z$.
\end{proof}

We proceed with the definition of ergodic stationary solution which coincides with the notion of ergodicity of the dynamical system
$ \big(\mathcal{T}, {\rm Borel} (\mathcal{T}), (S_t, t \in \mR), \mathcal{L} [{\theta},B]\big).$

\begin{definition}
A stationary solution $((\Omega,\mathcal{F},(\mathcal{F}_t),\mathbf{P}),\theta,B)$ is ergodic provided
\[ \mathcal{L}[\theta,B] (A) = 1 \quad \text{or} \quad \mathcal{L}[\theta,B] (A) = 0 \quad \text{for all } A
\subset\mathcal{T} \text{ Borel and shift invariant}. \]
\end{definition}

As a consequence of Theorem \ref{th:s1} we obtain their existence, non-uniqueness and non-Gaussianity.

\bt\label{thm:4.5}
There exist $K>0$ and an ergodic stationary solution $((\Omega,\mathcal{F},(\mathcal{F}_t),\mathbf{P}),\theta,B)$ satisfying for some $\delta\in(0,1),\kappa>0$
\begin{equation}\label{eq:s55}
 \$\theta\$_{B^{-1/2+\delta}_{\infty,1},2r}+\$\theta\$_{C_t^{1/2-\kappa}B^{-1-\delta}_{\infty,1},2r}\leq K,
 \end{equation}
 and it holds
\begin{equation}\label{eq:s56}\$\theta-\Lambda z-\Lambda Z\$_{B^{-1/2-\delta}_{\infty,1},2r}\leq 1.\end{equation}
Here  $z=\int_{-\infty}^t e^{-(t-s)\Lambda^{\gamma_1}} \Lambda^{\alpha-1}\dif B_s$ and $Z$ is a smooth deterministic function with only finitely many non-zero Fourier modes.
In particular, there are infinitely many non-Gaussian ergodic solutions.
\et

\begin{proof}
The result is obtained by a classical Krein--Milman argument and
 Theorem~\ref{th:s1}. Indeed, the set of all laws of  stationary solutions satisfying \eqref{eq:s55} and \eqref{eq:s56}
is non-empty, convex, tight and closed which follows from the same argument as the proof of Theorem \ref{th:s1} and Corollary \ref{cor:4.3}. Hence there exists an extremal point, which is  an ergodic stationary solution. Non-uniqueness is achieved by choosing different $Z$.
Non-Gaussianity follows from the fact that the stationary solution satisfies for $|k|\leq\lambda_0$   $\hat\theta(k)=\widehat{(\Lambda z+\Lambda Z)}(k)$ with $Z$  as in Corollary \ref{cor:4.3} also form a  non-empty, convex, tight and closed set.
\end{proof}

\section{Extension to fractional Navier--Stokes equations}\label{other}

Let us consider a singular fractional Navier--Stokes equations on $\mathbb{T}^2$. The equations govern the time evolution of the fluid velocity $u$ and  read as
\begin{equation}
\label{1}
\aligned
 \partial_{t} u+\div(u\otimes u)+\nabla P&=-\nu\Lambda^\gamma u +\Lambda^{-1+\alpha} \xi,
\\
\div u&=0,\\
u(0)&=u_{0}.
\endaligned
\end{equation}
Here $P$ is the associated pressure, $\xi$ is the space-time white noise with mean zero and $\nu\geq0$, $\alpha\in[0,1/2)$, $\gamma\in[0,1)$. We choose the exponent in the derivative of the noise this way in order to compare our result to \cite[Example 5.1 iii.]{GP23}. In the latter work, the authors considered the 2D Navier--Stokes equations in the vorticity form with noise $\Lambda^{\alpha }\xi$ and $\alpha=\gamma/2$. Going back to the velocity level therefore gains one derivative and leads to the formulation \eqref{1}. The equation in \cite{GP23} formally possesses a Gaussian invariant measure, which can be used to obtain existence and uniqueness for $\gamma\geq 1$ and existence for $\gamma\in (0,1)$. Similarly to the SQG equation \eqref{eq2} we obtain non-uniqueness but here we can cover the full supercritical regime. As a matter of fact, our approach  also applies to the Euler setting $\nu=0$.

We decompose the velocity as $u=z+v$
 where
 \begin{align}\label{eqz1}\partial_t z +\nabla P_z= -\Lambda^{\gamma_1}  z+ \Lambda^{- 1  {+\alpha}} \xi,\quad \div z =0,\quad
    z(0)=u_0,\end{align}
 \begin{align}\label{eqv1}  \partial_t v -\Lambda^{\gamma_1}  z+\nu\Lambda^{\gamma} z+ \div(v+z)\otimes(v+z) +\nabla P_v=
  -  \nu\Lambda^{\gamma} v, \quad \div v =0,\quad
    v(0)=0,\end{align}
    with  an  additional parameter $\gamma_{1}\in (2\alpha, 2-2\alpha)$.
We can apply the convex integration from \cite{CL20, CL22} to obtain existence and non-uniqueness of global solutions for every $u_0\in C^{1-2\alpha}$ satisfying the  divergence free condition. Although the construction  \cite{CL20, CL22, HZZ19, HZZ22a, HZZ21a} is formulated for $\gamma=1$, it also holds for $\gamma<1$ since the fractional Laplacian is easier to  control and the rest of the proof does not change. In fact, by Proposition \ref{fe z} it follows that  $z\in C_TC^{\gamma_1/2-\alpha-\kappa}$. To use convex integration method we require that $\Lambda^{\gamma_1}z$, $\Lambda^\gamma z\in C^{-1+}$, i.e. $\gamma_1<2-2\alpha$, $\gamma<2-2\alpha$. Then for these terms $\Lambda^{\gamma_1}z$, $\Lambda^\gamma z$ we can use a similar argument as for the SQG equation in this paper. Moreover, we need that $z$ is a function in $L^2$ which requires $\gamma_1>2\alpha$. This leads to our condition $2\alpha<\gamma_1<2-2\alpha$ and $\alpha<1/2$. For the error part containing the $z$ term we can use a similar argument as in \cite{LZ23, CDZ22} since $z$ is a function and the treatment of the  other terms is similar to \cite{CL20, CL22}. To summarize, we cover the supercritical regime in the velocity form of \cite[Example 5.1 iii.]{GP23} and obtain the following result.

 \bt\label{thm:5.1}
 Fix $\gamma\in [0,2-2\alpha)$, $\alpha\in [0,1/2)$.
For any divergence free  initial condition $u_0\in C^{1-2\alpha}$ $\mathbf{P}$-a.s.
there exists an $(\mathcal{F}_{t})$-adapted analytically weak solution $u$ to \eqref{1} with $u(0)=u_0$ which belongs to $L^p_{\rm{loc}}([0,\infty);L^2)\cap C([0,\infty);W^{-1,1})$ $\mathbf{P}$-a.s. for all $p\in[1,\infty)$. There are infinitely many such solutions $u$.

\et

\begin{remark}
In comparison to the SQG situation,  the Navier--Stokes setting  is actually easier. The reason is the different form of the nonlinearity in the equation where convex integration is applied: $\Lambda z\nabla^{\perp}z$ compared to $\div (z\otimes z)$. Particularly, the Navier--Stokes nonlinearity is well-defined for $z$ merely function valued whereas the SQG setting requires more regularity of $z$. If $\gamma$ is small then $z$ defined via the conventional Da Prato--Debussche trick with $\gamma_{1}=\gamma$ is not function valued and the method does not work. Making $\gamma_{1}>\gamma$ makes $z$ more regular which can be nicely combined with convex integration to treat even supercritical/critical situations.
\end{remark}

\appendix
\renewcommand{\appendixname}{Appendix~\Alph{section}}
\renewcommand{\theequation}{A.\arabic{equation}}

\section{Some auxiliary results}
\label{s:B}

In this part, we recall some auxiliary lemmas from \cite{CKL21}. The first result is rather classical and permits to rewrite the nonlinearity and  formulate the notion of weak solution as in Definition~\ref{d:1}.

\begin{proposition}{\cite[Proposition 5.1]{CKL21}}\label{p:51}
Let $\mathcal{R}=\mathcal{R}_{j}$, $j=1,2$. Assume that $\phi\in H^{3}$ and $\theta\in \dot{H}^{-1/2}$. Then we have
$$
\|[\mathcal{R},\phi]\theta\|_{\dot{H}^{1/2}}\lesssim \|\phi\|_{\dot{H}^{3}}\|\theta\|_{\dot{H}^{-1/2}}.
$$
\end{proposition}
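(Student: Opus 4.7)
The estimate is a Calder\'on-type commutator bound: although the Riesz transform $\mathcal{R}$ is of order zero, commuting it with a sufficiently regular multiplier $\phi$ produces an operator that gains one full derivative, so that $[\mathcal{R},\phi]\colon \dot{H}^{-1/2}\to\dot{H}^{1/2}$ with operator norm controlled by a regularity norm of $\phi$ dominating $\|\nabla\phi\|_{L^\infty}$. I would prove it through Bony's paraproduct decomposition. Expanding $\phi\theta$ and $\phi\mathcal{R}\theta$ via $fg=T_fg+T_gf+R(f,g)$ and subtracting yields
\begin{equation*}
[\mathcal{R},\phi]\theta = [\mathcal{R},T_\phi]\theta + \mathcal{R}(T_\theta\phi) - T_{\mathcal{R}\theta}\phi + \mathcal{R}\,R(\phi,\theta) - R(\phi,\mathcal{R}\theta).
\end{equation*}
Since $\mathcal{R}$ is bounded on every $\dot{H}^{s}$ and preserves $\dot{H}^{-1/2}$, the two terms containing $\mathcal{R}\theta$ are of the same type as their counterparts with $\theta$, so it suffices to control $[\mathcal{R},T_\phi]\theta$, $T_\theta\phi$ and $R(\phi,\theta)$ in $\dot{H}^{1/2}$ by $\|\phi\|_{\dot{H}^{3}}\|\theta\|_{\dot{H}^{-1/2}}$.

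The principal term is the paraproduct commutator $[\mathcal{R},T_\phi]\theta=\sum_j[\mathcal{R},S_{j-1}\phi]\Delta_j\theta$, which is where the gain of a derivative originates. Each block acts on inputs Fourier-supported in $|\xi|\sim 2^{j}$, where the Riesz symbol $\xi/|\xi|$ is smooth and homogeneous of degree zero. A standard Coifman--Meyer type calculation (first-order Taylor expansion of the symbol around the frequency localization of $S_{j-1}\phi$, which spends one derivative on $\phi$) produces the crucial gain
\begin{equation*}
\|[\mathcal{R},S_{j-1}\phi]\Delta_j\theta\|_{L^{2}}\lesssim 2^{-j}\|\nabla\phi\|_{L^\infty}\|\Delta_j\theta\|_{L^{2}}.
\end{equation*}
Summing with the $\dot{H}^{1/2}$ weight $2^{j}$ collapses to $\|\nabla\phi\|_{L^\infty}\|\theta\|_{\dot{H}^{-1/2}}$, and Sobolev embedding on $\mathbb{T}^{2}$ gives $\|\nabla\phi\|_{L^\infty}\lesssim\|\phi\|_{\dot{H}^{2+\varepsilon}}\leq\|\phi\|_{\dot{H}^{3}}$.

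The remaining two terms are mechanical. For $T_\theta\phi=\sum_j S_{j-1}\theta\,\Delta_j\phi$, Bernstein combined with a Cauchy--Schwarz estimate yields $\|S_{j-1}\theta\|_{L^\infty}\lesssim 2^{3j/2}\|\theta\|_{\dot{H}^{-1/2}}$, and the dyadic sum then produces $\|T_\theta\phi\|_{\dot{H}^{1/2}}\lesssim\|\theta\|_{\dot{H}^{-1/2}}\|\phi\|_{\dot{H}^{2}}\leq\|\theta\|_{\dot{H}^{-1/2}}\|\phi\|_{\dot{H}^{3}}$. The resonant part is handled by the classical high-high Besov product estimate $\|R(\phi,\theta)\|_{\dot{H}^{s_1+s_2-1}}\lesssim\|\phi\|_{\dot{H}^{s_1}}\|\theta\|_{\dot{H}^{s_2}}$ valid once $s_1+s_2>0$; with $(s_1,s_2)=(3,-1/2)$ the output lies in $\dot{H}^{3/2}\hookrightarrow\dot{H}^{1/2}$ on mean-zero functions over $\mathbb{T}^{2}$.

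The technically delicate ingredient is the paraproduct commutator: it is where smoothness of the Riesz symbol away from the origin is genuinely used, and the precise $2^{-j}$ gain drives the whole estimate. Everything else is routine paraproduct bookkeeping; in fact, the assumption $\phi\in\dot{H}^{3}$ is not sharp, as $\dot{H}^{2+\varepsilon}$ already provides enough Sobolev embedding to close the argument.
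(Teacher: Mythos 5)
The paper does not prove this proposition at all: it is imported verbatim from \cite[Proposition 5.1]{CKL21}, so there is no in-paper argument to measure you against. Judged on its own terms, your proof is correct and complete. The Bony splitting of $[\mathcal{R},\phi]\theta$ is the right bookkeeping, the paraproduct-commutator bound $\|[\mathcal{R},S_{j-1}\phi]\Delta_j\theta\|_{L^2}\lesssim 2^{-j}\|\nabla\phi\|_{L^\infty}\|\Delta_j\theta\|_{L^2}$ is the standard commutator lemma for a degree-zero symbol that is smooth on the annulus $|\xi|\sim 2^j$ (on $\mathbb{T}^2$ one inserts a fattened annulus cut-off $\tilde\varphi(2^{-j}D)$ so that the non-smoothness of $\xi/|\xi|$ at the origin never enters), and your Bernstein and remainder estimates, including the exponents $2^{3j/2}$ and $\dot H^{s_1+s_2-1}$ in dimension two, all check out. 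Two cosmetic remarks: the gain of a derivative in the commutator is usually obtained by Taylor-expanding $S_{j-1}\phi(x)-S_{j-1}\phi(y)$ inside the kernel representation rather than by expanding the symbol, though the frequency-side version you describe also works; and one should note that the constant Fourier mode of $\phi$ commutes with $\mathcal{R}$, so one may assume $\phi$ mean-free, after which all the inequalities $\|\phi\|_{\dot H^{s}}\le\|\phi\|_{\dot H^{3}}$ for $s\le 3$ that you use implicitly are legitimate on the torus. The original proof in \cite{CKL21} reaches the same conclusion by a more elementary route, writing the commutator directly on the Fourier side and exploiting the Lipschitz-type bound $|\hat{\mathcal R}(k)-\hat{\mathcal R}(k-m)|\lesssim |m|/\max(|k|,|k-m|)$ for the degree-zero homogeneous multiplier together with a weighted Cauchy--Schwarz/Schur argument; your paraproduct version is longer but more modular, makes the origin of the one-derivative gain transparent, and, as you observe, shows that $\dot H^{2+\varepsilon}$ regularity of $\phi$ already suffices.
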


The second result is a kind of Leibniz rule useful in the estimate of the new stress in Section~\ref{s:estq}.

\begin{lemma}{\cite[Lemma 2.1]{CKL21}}\label{lem:21} Let $|l|=1,\lambda l\in\mathbb{Z}^2$, and $g(x)=a(x)\cos (\lambda l\cdot x).$ Then
$$\Lambda g=\lambda g+l\cdot \nabla a \sin(\lambda l\cdot x)+ (T^{(1)}_{\lambda l}a) \cos(\lambda l\cdot x)+(T^{(2)}_{\lambda l}a) \sin(\lambda l\cdot x),$$
where $$\widehat{T_{\lambda l}^{(1)}a}(k)=\bigg(\frac{|\lambda l+k|+|\lambda l-k|}2-\lambda\bigg)\hat{a}(k),$$
$$\widehat{T_{\lambda l}^{(2)}a}(k)=i\bigg(\frac{|\lambda l+k|-|\lambda l-k|}2-l\cdot k\bigg)\hat{a}(k).$$
\end{lemma}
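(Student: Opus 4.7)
The plan is to verify the identity on the Fourier side by expanding the cosine into complex exponentials and using that modulation corresponds to a frequency shift.

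First, write $g=\tfrac12\bigl(a e^{i\lambda l\cdot x}+a e^{-i\lambda l\cdot x}\bigr)$. With the convention $\hat f(k)=\int_{\mathbb T^2}e^{ix\cdot k}f\,dx$ fixed in Section~\ref{s:not}, modulation acts by a frequency shift: $\widehat{a e^{\pm i\lambda l\cdot x}}(k)=\hat a(k\pm\lambda l)$. Applying $\Lambda$, whose Fourier symbol is $|k|$, and then reindexing $k\mapsto k\mp\lambda l$, I obtain
\[
\Lambda\bigl(a e^{\pm i\lambda l\cdot x}\bigr)=e^{\pm i\lambda l\cdot x}M_{\pm}a,\qquad \widehat{M_\pm a}(k)=|k\mp\lambda l|\,\hat a(k).
\]
Averaging the two modulated pieces and regrouping into cosine and sine parts,
\[
\Lambda g=\cos(\lambda l\cdot x)\,\frac{M_+a+M_-a}{2}+i\sin(\lambda l\cdot x)\,\frac{M_-a-M_+a}{2}.
\]

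Next, I extract the principal parts of each multiplier. The cosine symbol $\tfrac{|k+\lambda l|+|k-\lambda l|}{2}$ splits as $\lambda+\bigl(\tfrac{|k+\lambda l|+|k-\lambda l|}{2}-\lambda\bigr)$, where the second piece is exactly $\widehat{T^{(1)}_{\lambda l}a}(k)/\hat a(k)$ by definition; hence the cosine term equals $\lambda a\cos(\lambda l\cdot x)+T^{(1)}_{\lambda l}a\cos(\lambda l\cdot x)=\lambda g+T^{(1)}_{\lambda l}a\cos(\lambda l\cdot x)$. For the sine symbol, expand $|k\pm\lambda l|^2=\lambda^2\pm 2\lambda\, l\cdot k+|k|^2$ to identify the first-order behaviour of $\tfrac{|k+\lambda l|-|k-\lambda l|}{2}$ as $l\cdot k$ plus a bounded correction; writing the sine multiplier as $\pm il\cdot k\,\hat a(k)$ plus a remainder matching $\widehat{T^{(2)}_{\lambda l}a}(k)$, and identifying the principal Fourier symbol $\pm ik_j$ with the derivative $\partial_j$ under the convention of Section~\ref{s:not}, the sine term becomes $l\cdot\nabla a\sin(\lambda l\cdot x)+T^{(2)}_{\lambda l}a\sin(\lambda l\cdot x)$. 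Summing these contributions gives the claimed formula.

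The only delicate point is matching signs consistently between (i) the direction of the frequency shift in the modulation step, (ii) the first-order expansion of $|k\pm\lambda l|$, and (iii) the Fourier symbol of $\nabla$ under the chosen convention. Once these signs are fixed the computation is purely algebraic, and this bookkeeping is the sole obstacle.
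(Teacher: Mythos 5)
Your approach --- expand the cosine into complex exponentials, commute $\Lambda$ past the modulation to get the shifted multipliers $|k\mp\lambda l|$, and peel off the zeroth- and first-order parts of the symbols --- is the natural (and essentially only) proof of this identity; note that the paper itself gives no proof but simply quotes \cite[Lemma 2.1]{CKL21}, and your computation is exactly what that reference does. The structure is correct and complete. Two sign issues remain, one of which is a genuine slip in your displayed formula. First, the elementary regrouping is
\begin{equation*}
\tfrac12\bigl(e^{i\theta}A+e^{-i\theta}B\bigr)=\frac{A+B}{2}\cos\theta+i\,\frac{A-B}{2}\sin\theta ,
\end{equation*}
so with $A=M_+a$, $B=M_-a$ the sine coefficient is $i\,\tfrac{M_+a-M_-a}{2}$, not $i\,\tfrac{M_-a-M_+a}{2}$ as you wrote. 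Second, you defer the sign bookkeeping rather than closing it, and it does not come out neutral: the lemma's signs are those of the standard convention $\hat f(k)=\int e^{-ix\cdot k}f\,\dif x$, under which $\widehat{ae^{\pm i\lambda l\cdot x}}(k)=\hat a(k\mp\lambda l)$ and $\widehat{l\cdot\nabla a}(k)=i\,l\cdot k\,\hat a(k)$, and then the sine multiplier $i\,\tfrac{|k+\lambda l|-|k-\lambda l|}{2}$ splits exactly into $i\,l\cdot k$ plus the stated symbol of $T^{(2)}_{\lambda l}$. If one instead takes the convention literally as printed in Section~\ref{s:not} (with $e^{+ix\cdot k}$ in the forward transform), the modulation shift and the symbol of $\nabla$ both flip, the $l\cdot\nabla a$ term survives, but the remainder comes out as $-T^{(2)}_{\lambda l}a$ rather than $+T^{(2)}_{\lambda l}a$. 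This discrepancy is harmless for the paper, since $T^{(2)}_{\lambda l}$ only ever enters through the bound $\|T^{(2)}_{\lambda l}b_0\|_{L^\infty}$ in Lemma~\ref{lem:33}, but a finished proof should fix one convention and carry it through; with the standard one your argument closes with no loose ends.
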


Finally, in order to control the operators in Lemma~\ref{lem:21}, we rely on the following.
	
\bl{\cite[Lemma 3.3]{CKL21}}\label{lem:33}   Assume $b_0:\mT^2\to \mR$ with $\supp(\hat{b}_0)\subset\sup\{|k|\leq \mu\}$ and $10\leq \mu\leq \frac12\lambda$.
Then  $$\|T_{\lambda l}^{(1)}b_0\|_{L^\infty}\lesssim \lambda^{-1}\mu^2\|b_0\|_{L^\infty},$$
$$\|T_{\lambda l}^{(2)}b_0\|_{L^\infty}\lesssim \lambda^{-2}\mu^3\|b_0\|_{L^\infty}.$$
\el
%

%

\def\cprime{$'$} \def\ocirc#1{\ifmmode\setbox0=\hbox{$#1$}\dimen0=\ht0
  \advance\dimen0 by1pt\rlap{\hbox to\wd0{\hss\raise\dimen0
  \hbox{\hskip.2em$\scriptscriptstyle\circ$}\hss}}#1\else {\accent"17 #1}\fi}

\end{document}